\newif\ifhascomments \hascommentstrue
  \newcommand{\david}[1]{{\color{red}[[\ensuremath{\bigstar\bigstar\bigstar} #1]]}}
  \newcommand{\matt}[1]{{\color{red}[[\ensuremath{\spadesuit\spadesuit\spadesuit} #1]]}}
  \newcommand{\david}[1]{}
  \newcommand{\matt}[1]{}
\newtheorem{theorem}{Theorem}[section]
\newtheorem{lemma}[theorem]{Lemma}
\newtheorem{fact}[theorem]{Fact}
\newtheorem{proposition}[theorem]{Proposition}
\theoremstyle{definition}
\newtheorem{remark}[theorem]{Remark}
\newtheorem{definition}[theorem]{Definition}
\begin{document}

\title{On the Weierstrass Preparation Theorem over General Rings}

\author{Jason Bell}
\address{Department of Pure Mathematics\\ University of Waterloo\\
Waterloo, Ontario\\
N2L 3G1}
\email{jpbell@uwaterloo.ca}

\author{Peter Malcolmson}
\address{Department of Mathematics\\ Wayne State University \\ 656 W. Kirby St.\\
 Detroit, MI, USA \\
 48202}
\email{petem@wayne.edu}

\author{Frank Okoh}
\address{Department of Mathematics\\ Wayne State University \\ 656 W. Kirby St.\\
 Detroit, MI, USA \\
 48202}
\email{okoh@wayne.edu}

\author{Yatin Patel}
\address{Department of Mathematics\\ Wayne State University \\ 656 W. Kirby St.\\
 Detroit, MI, USA \\
 48202}
\email{yatin@wayne.edu}
\keywords{Weierstrass preparation theorem, complete local rings, decidability, $p$-adic analysis, transcendence}
\subjclass{Primary: 13J05, 13F25; Secondary: 11J81, 11U05}

\begin{abstract}
We study rings over which an analogue of the Weierstrass preparation theorem holds for power series. We show that a commutative ring \( R \) admits a factorization of every power series in \( R[[x]] \) as the product of a polynomial and a unit if and only if \( R \) is isomorphic to a finite product of complete local principal ideal rings. We also characterize Noetherian rings \( R \) for which this factorization holds under the weaker condition that the coefficients of the series generate the unit ideal: this occurs precisely when $R$ is isomorphic to a finite product of complete local Noetherian integral domains.

Beyond this, we investigate the failure of Weierstrass-type preparation in finitely generated rings and prove a general transcendence result for zeros of $p$-adic power series, producing a large class of power series over number rings that cannot be written as a polynomial times a unit. Finally, we show that for a finitely generated infinite commutative ring $R$, the decision problem of determining whether an integer power series (with computable coefficients) factors as a polynomial times a unit in \( R[[x]] \) is undecidable.
\end{abstract}
\maketitle

\section{Introduction}
The Weierstrass preparation theorem is a tool of fundamental importance in non-Archimedean analysis, which states that for a complete local ring $R$, if a power series $f(x)=a_0+a_1x+\cdots \in R[[x]]$ has the property that, for some $n$, $a_0,\ldots ,a_n$ generate the unit ideal, then there exists a polynomial $p(x)\in R[x]$ of degree $n$ and a unit of the power series ring $u(x)$ such that $f(x)=p(x)u(x)$.  This result plays an important role within algebraic and analytic geometry, analytic number theory, Iwasawa theory, the study of central simple algebras, model theory, and other areas (see, for example, \cite{Serre, BGR, BurnsGreither, VenjakobWeierstrass, HarbaterHartmannKrashen, CluckersLipshitz, vddMiller, Denefvdd}) as it often allows one to understand ideals arising in analytic settings in terms of polynomial relations. 

The main purpose of this paper is to study the class of rings $R$ for which one has a Weierstrass preparation theorem as above.  In particular, there are two natural variants to consider: a strong version (which we call the \emph{strong Weierstrass property}) in which one insists that \emph{every} power series can be factored as a product of a polynomial and a unit, and a weaker version---the \emph{Weierstrass property}---in which only power series whose coefficients generate the unit ideal are required to factor as in the statement of the Weierstrass preparation theorem (see Definition \ref{defn:Weierstrass} for a precise definition). We note that this ``weaker version'' is more in the spirit of the classical Weierstrass preparation theorem, and so we have not added an adjective in this case.  Moreover, while the stronger version is not \emph{a priori} stronger when one carefully examines Definition \ref{defn:Weierstrass}, we shall nevertheless see that the strong Weierstrass property implies the ordinary property (see Remark \ref{rem:strong}).

The stronger variant of the Weierstrass preparation theorem does not generally hold in the complete local case, but it does hold in many of the most important settings in which one uses Weierstrass preparation; namely the case when the coefficient ring $R$ is a ring of $p$-adic integers or a one-variable power series ring.  For this reason we isolate this stronger variant of the property and investigate the settings in which it holds. Our first main result shows that every ring satisfying the strong Weierstrass property is, in a structural sense, built from rings exhibiting similar behavior to the examples given above.

\begin{theorem}\label{thm:main1}
Let \( R \) be a commutative ring. Then \( R \) has the strong Weierstrass property if and only if \( R \) is a complete principal ideal ring.  
In particular, \( R \) has the strong Weierstrass property precisely when there exists a natural number \( m \) and rings \( R_1, \ldots, R_m \) such that
\[
R \cong R_1 \times \cdots \times R_m,
\]
where for each \( i \), the ring \( R_i \) is either a complete discrete valuation ring or an Artinian local principal ideal ring.
\end{theorem}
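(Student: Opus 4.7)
The plan is to prove both directions separately. The backward direction is the easier half, and I would handle it by using that the strong Weierstrass property descends to and ascends from finite products componentwise, reducing the task to verifying it for a single complete DVR or Artinian local PIR. In each such ring the maximal ideal is generated by a single uniformizer $\pi$, and every nonzero element has the form $\pi^{k} \cdot (\text{unit})$; given $f \in R_i[[x]]$, I would factor out the minimal $\pi$-valuation occurring among its coefficients to reduce to a series with at least one unit coefficient, and then apply the classical Weierstrass preparation theorem (available by completeness, which is automatic in the Artinian case), allowing the polynomial factor to absorb the factored-out power of $\pi$.

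For the forward direction, I would begin from a foundational observation about coefficient ideals: if $f = \sum_{j \geq 0} a_j x^j = p(x) u(x)$ with $\deg p = n$ and $u \in R[[x]]^{\times}$, then comparing coefficients inductively and using that $u_0$ is invertible yields $(a_0, a_1, \ldots) = (p_0, \ldots, p_n) = (a_0, \ldots, a_n)$. This shows every countably generated ideal of $R$ is finitely generated; since any ideal failing to be finitely generated would contain a strictly ascending chain whose union is countably generated but not finitely generated, $R$ must be Noetherian.

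The remaining work is to show that $R$ is a finite product of complete DVRs and Artinian local principal ideal rings. My plan is to rule out each possible failure mode by constructing an explicit nonfactorizable power series. To preclude infinitely many maximal ideals, I would use the Chinese Remainder Theorem to design a series $\sum a_n x^n$ whose Weierstrass behavior is inconsistent with the existence of any single polynomial factor of bounded degree. To preclude a non-principal maximal ideal in a complete local factor $(S, \mathfrak{n})$, I would lift $t_1, t_2 \in \mathfrak{n}$ linearly independent modulo $\mathfrak{n}^2$ and construct a transcendental series $f(x) = t_1 - t_2 \phi(x)$ so that $(f) \subset S[[x]]$ is prime but not associate to any ideal generated by an element of $S[x]$; any factorization $f = pu$ would force $(p) = (f)$, which is impossible. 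To preclude incompleteness of a local factor, I would encode a non-convergent Cauchy sequence into a power series whose zero in the completion is transcendental over the fraction field of $S$, again obstructing any polynomial factor. The main technical obstacle I anticipate is constructing the required transcendental $\phi$ in arbitrary characteristic---the natural choice $\phi = \exp(x)$ works only in characteristic zero---and here I would expect to draw on the transcendence analysis of roots of $p$-adic power series developed elsewhere in the paper.
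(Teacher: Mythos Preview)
Your backward direction matches the paper's argument (Lemma~\ref{CLPIPSAP} together with closure under finite products, Lemma~\ref{prop:1.3}), and your coefficient-ideal observation that the strong Weierstrass property forces $R$ to be Noetherian is a clean direct variant of the paper's Lemma~\ref{lem:noether}.

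Where your forward direction diverges from the paper is in the subsequent structure analysis, and the divergence creates real difficulties. The paper does \emph{not} attempt to prove semilocality first and then study local factors. Instead it proves in one stroke that $R$ must be a principal ideal ring: if some maximal ideal $P$ is not principal, pick $a,b\in P$ linearly independent modulo $P^2$ and show that $a+b\Psi(x)$ is not associate to a polynomial for any irrational $\{0,1\}$-series $\Psi$ (Lemma~\ref{prop:PIR}). Kaplansky's structure theorem for PIRs then hands you the decomposition $R\cong R_1\times\cdots\times R_m$ with each $R_i$ a PID or an Artinian local PIR, and semilocality comes for free. Only at that point does the paper analyze the PID factors, showing each is a complete DVR via a delicate argument involving algebraic zeros in the completion (Lemma~\ref{lem:technical} and Proposition~\ref{prop:cdvr}).

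Your proposed order---semilocal first via a CRT trick, then principal maximal ideals in each ``complete local factor,'' then completeness---has two problems. First, the CRT step is underspecified and I do not see how it works: the existence of infinitely many maximal ideals does not by itself produce a coefficient sequence obstructing \emph{every} factorization $f=pu$, since the degree of $p$ is not bounded a~priori. Second, you refer to a ``complete local factor'' when ruling out non-principal maximal ideals, but completeness and even the existence of a product decomposition have not yet been established, so the logic is circular. Separately, your claim that $(t_1-t_2\phi(x))$ should be \emph{prime} is unnecessary and likely false in general; what is needed---and what Lemma~\ref{prop:PIR} proves---is only that such a series is not associate to a polynomial. The PIR-then-Kaplansky route avoids all of this.
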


Complete discrete valuation rings (DVRs) are well understood and have a concrete classification. If \( R \) is a complete DVR with maximal ideal \( P \) and residue field \( k \), then there are two main cases:

\begin{itemize}
    \item \textbf{Equicharacteristic case}: \( \mathrm{char}(R) = \mathrm{char}(k) \). In this situation, Cohen's structure theorem implies that \( R \cong k[[t]] \), the formal power series ring over the field \( k \); see Cohen~\cite{Cohen}.
    
    \item \textbf{Mixed characteristic case}: \( \mathrm{char}(R) = 0 \) and \( \mathrm{char}(k) = p > 0 \). This case is more subtle but still well understood. When the residue field \( k \) is perfect and the maximal ideal of \( R \) is generated by \( p \), the ring \( R \) is isomorphic to the ring of Witt vectors \( W(k) \), which generalizes the construction of the \( p \)-adic integers \( \mathbb{Z}_p \), corresponding to \( k = \mathbb{F}_p \), see Serre~\cite{Serre} and Hazewinkel~\cite{Hazewinkel} for details.
\end{itemize}

On the other hand, \textbf{Artinian local principal ideal rings} are particularly well-behaved. They have a unique maximal ideal \( P \), and there exists some integer \( n \geq 1 \) such that \( P^n = (0) \). The only proper ideals of the ring are \( P, P^2, \ldots, P^n \). These rings can in some sense be viewed as ``truncated DVRs.'' 

The usual Weierstrass property (that is, having a true analogue of the Weierstrass preparation theorem) is somewhat more subtle and requires more detailed arguments, even in the Noetherian case.  We know in this case that all complete local rings have this property and we are again able to give a classification of rings with this property. 
\begin{theorem}
\label{thm:main2} Let $R$ be a commutative Noetherian ring. Then $R$ has the Weierstrass property precisely when $R$ is isomorphic to a finite product of complete local Noetherian rings.
\end{theorem}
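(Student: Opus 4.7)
The proof plan divides along the two implications of the equivalence.

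For $(\Leftarrow)$, suppose $R \cong R_1 \times \cdots \times R_m$ with each $R_i$ complete local Noetherian. Then $R[[x]] \cong \prod_i R_i[[x]]$, and a power series $f \in R[[x]]$ corresponds to a tuple $(f_1, \ldots, f_m)$. Because each $R_i$ is local with maximal ideal $\mathfrak{m}_i$, the hypothesis that the coefficients of $f$ generate the unit ideal of $R$ forces, in each factor, that some coefficient of $f_i$ lies outside $\mathfrak{m}_i$. The classical Weierstrass preparation theorem in each complete local Noetherian $R_i$ then yields $f_i = p_i u_i$ with $p_i \in R_i[x]$ and $u_i \in R_i[[x]]^\times$, and these reassemble into a global factorization $f = pu$ with $\deg p = \max_i \deg p_i$.

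For $(\Rightarrow)$, assume $R$ is Noetherian with the Weierstrass property. My aim is to show that $R$ is semilocal and $J(R)$-adically complete, from which a standard structure argument produces $R \cong \prod_i \hat R_{\mathfrak{m}_i}$, a finite product of complete local Noetherian rings. The argument has three stages. First, semilocality is proved by contradiction: assuming infinitely many distinct maximal ideals $\mathfrak{m}_0, \mathfrak{m}_1, \ldots$, use the Chinese Remainder Theorem to construct coefficients $a_n \in R$ with carefully coordinated residues (e.g.\ $a_n \in \bigcap_{j<n}\mathfrak{m}_j$ but $a_n \notin \mathfrak{m}_n$, with the $a_0$ chosen so that the coefficient ideal of $f(x) = \sum a_n x^n$ remains $R$). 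A putative Weierstrass factorization $f = pu$ with $\deg p = d$ descends, upon reduction modulo $\mathfrak{m}_n$, to a factorization in the DVR $(R/\mathfrak{m}_n)[[x]]$, and a comparison of orders/degrees across all sufficiently large $n$ is arranged to force an inequality incompatible with the fixity of $d$.

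Second, once $R$ is semilocal with maximal ideals $\mathfrak{m}_1, \ldots, \mathfrak{m}_m$, I aim to lift the primitive idempotents of $R/J(R) \cong \prod_i R/\mathfrak{m}_i$ to $R$, giving a product decomposition $R \cong \prod_i R_i$ into local Noetherian rings (each of which inherits the Weierstrass property as a direct factor). This lifting is achieved via a Hensel-type argument in which the equation $e^2 = e$ is solved iteratively using Weierstrass factorizations of auxiliary power series. Third, each local factor $R_i$ is shown to be $\mathfrak{m}_i$-adically complete by taking a Cauchy sequence $(r_n)$ with $r_{n+1} - r_n \in \mathfrak{m}_i^n$, encoding the increments $a_n = r_n - r_{n-1} \in \mathfrak{m}_i^{n-1}$ in the coefficients of an auxiliary power series, and using its Weierstrass factorization to extract a limit $r \in R_i$.

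The main obstacle is the semilocality step: converting the infinitude of maximal ideals into a genuine obstruction to a single polynomial-times-unit factorization requires a delicate simultaneous arrangement of residue conditions modulo infinitely many primes, and a careful argument that the resulting constraints on the fixed-degree polynomial $p$ are incompatible. The idempotent lifting and completeness steps are more routine, provided one has flexibility in encoding the desired algebraic conditions as coefficient data in a power series to which the Weierstrass hypothesis applies.
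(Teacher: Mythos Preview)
Your $(\Leftarrow)$ is fine (minor point: apply the Weierstrass property in each factor with the \emph{same} $n$, obtaining monic $p_i$ of degree exactly $n$ that assemble directly to a monic degree-$n$ polynomial in $R[x]$; no maximum is needed). The $(\Rightarrow)$ direction, however, has a genuine gap in Stage~1. In $k[[x]]$ for $k=R/\mathfrak{m}_N$ the only invariant is the order of vanishing at $x=0$; the degree of $\bar f$ as a polynomial is invisible to the unit group. Your CRT construction forces $\bar a_j=0$ for $j>N$ and $\bar a_N\ne 0$, so $\bar f$ is a degree-$N$ polynomial modulo $\mathfrak{m}_N$, but it imposes no control on $\bar a_0,\ldots,\bar a_{N-1}$ and hence none on $\operatorname{ord}(\bar f)$. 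Once you apply the Weierstrass property with the least $d$ satisfying $(a_0,\ldots,a_d)=R$, the constraint $\operatorname{ord}(\bar f)=\operatorname{ord}(\bar p)\le d$ modulo every maximal ideal is already \emph{automatic} from $(a_0,\ldots,a_d)=R$ (not all of $\bar a_0,\ldots,\bar a_d$ vanish), so the ``fixity of $d$'' is never in tension with any residue computation, no matter how large $N$ is. Stage~3 is likewise missing the decisive mechanism: writing Cauchy increments as coefficients does not by itself exhibit an element of $R$ equal to the limit, and you do not indicate which power series to factor or why its Weierstrass root should recover that limit.

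The paper bypasses both difficulties by a different route. It first passes to $R/P$ for each minimal prime $P$ (using closure of the Weierstrass property under quotients), reducing to the case of a Noetherian integral domain. There, for $a$ in a maximal ideal and any $f\in R[[x]]$, one takes the compositional inverse $g$ of $x+x^2f(x)$ and sets $h(x):=g(x)-a=-a+x+\cdots$; since the linear coefficient is $1$, the Weierstrass property with $n=1$ gives $h(x)=(x-\lambda)u(x)$, and evaluating at $c:=a+a^2f(a)\in\widehat R$ yields $\lambda=c\in R$. Iterating over generators of the maximal ideal forces $R=\widehat R$, so locality and completeness come in one stroke. Semilocality of the original $R$ then follows because each $R/P_i$ is local, and a theorem of Nagata together with the standard decomposition of complete semilocal Noetherian rings finishes the argument without any separate semilocality proof or idempotent-lifting step.
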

For both the Weierstrass property and its stronger variant, any non-Artinian ring satisfying either condition admits a surjective map onto a complete local integral domain of positive Krull dimension. In particular, in the non-Artinian case such a ring must be uncountable.  It is thus interesting to study the extent to which the Weierstrass property fails for non-complete, non-Artinian rings.  A natural starting point is the ring of integers, or more generally, the ring of integers in a number field.  Here one encounters thorny problems in Diophantine approximation; namely, questions of whether power series over the ring of $p$-adic integers have algebraic zeros.  

We are able to give a general transcendence criterion for integer power series, which shows as a corollary that such power series are never unit multiples of a polynomial.  Here, we recall that given a prime number $p$, we can regard the integers as a dense subring of the complete local ring $\mathbb{Z}_p$ of $p$-adic integers, which is a metric space with norm $|\, \cdot \,|_p$.  Then an integer power series $f(x)$ can be regarded as a function that is analytic in the open unit disc of $\mathbb{Z}_p$ (the set of all numbers of $p$-adic absolute value $<1$) and more generally of the open unit disc of $\mathbb{C}_p$, which is the field obtained by taking the completion of the algebraic closure of the field of fractions, $\mathbb{Q}_p$, of the ring of $p$-adic integers.  The field $\mathbb{C}_p$ is a $p$-adic analogue of the complex numbers and as such, just as we can in the case of complex numbers, we have elements of $\mathbb{C}_p$ that are \emph{algebraic} (roots of nonzero integer polynomials) and elements that are \emph{transcendental} (i.e., elements that are \emph{not} algebraic over the subfield of rational numbers).  We note that the $p$-adic absolute value extends to an absolute value on $\mathbb{C}_p$ and that $\mathbb{C}_p$  is algebraically closed. We refer the reader to the book of Gouvêa \cite{Gouvea} for further details.
\begin{theorem}
\label{thm:main3}
Let $p$ be a prime number and let $\{b(n)\}_{n\ge 0}$ be a strictly increasing sequence of nonnegative integers with $b(0)=1$ such that $b(n)$ grows super-exponentially as $n\to \infty$. Suppose in addition that 
$a_0, a_1, a_2,\ldots $ are nonzero integers such that:
\begin{enumerate}
\item $p\mid a_0$;
\item there is some $i\ge 1$ such that $p\nmid a_i$;
\item there are positive constants $C,\kappa >1$ such that $|a_n|\leq \kappa\cdot C^{b(n)}$ for all $n$.
\end{enumerate}
Then the power series $$f(x):=\sum_{n\ge 0} a_n x^{b(n)}$$ has a root $\lambda$ in $\{z\in \overline{\mathbb{Q}}_p\colon |z|_p<1\}$ that is transcendental over $\mathbb{Q}$, and $f(x)$ cannot be factored in $\mathbb{Z}[[x]]$ as a product of an integer polynomial and a unit of $\mathbb{Z}[[x]]$, which demonstrates the failure of the Weierstrass property in this setting.
\end{theorem}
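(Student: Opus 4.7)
The plan is to produce a transcendental zero of $f$ inside the open unit disk of $\overline{\mathbb{Q}}_p$; the failure of the Weierstrass property will then be immediate, since any unit of $\mathbb{Z}[[x]]$ has constant term $\pm 1$ and is therefore also a unit of $\mathbb{Z}_p[[x]]$, hence has no zeros in the open unit disk of $\overline{\mathbb{Q}}_p$. Any factorization $f = P\cdot u$ with $P \in \mathbb{Z}[x]$ would then force every zero of $f$ in that disk to be a root of $P$, and so algebraic.

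To produce the required nonzero root, I will apply the classical Weierstrass preparation theorem in the complete local ring $\mathbb{Z}_p[[x]]$. By hypotheses (1) and (2), the smallest $i$ with $p \nmid a_i$ satisfies $i \geq 1$, and the reduction $\bar f \in \mathbb{F}_p[[x]]$ has order $b(i) \geq b(1) \geq 2$; preparation then gives $f = uP$ with $P \in \mathbb{Z}_p[x]$ a distinguished polynomial of degree $b(i)$ and $u \in \mathbb{Z}_p[[x]]^{\times}$. Since $b(0) = 1$ and $a_0 \neq 0$, the power series $f(x)/x$ has nonzero constant term, so $x$ divides $P$ exactly once: $P(x) = xQ(x)$ with $\deg Q = b(i) - 1 \geq 1$. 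Any root $\lambda \in \overline{\mathbb{Q}}_p$ of $Q$ then satisfies $\lambda \neq 0$, $|\lambda|_p < 1$, and $f(\lambda) = 0$.

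The hard part will be showing that such a $\lambda$ cannot be algebraic, which I would attack via a $p$-adic Liouville argument on the partial sums $f_n(x) = \sum_{k<n} a_k x^{b(k)}$. Assuming $\lambda \in \overline{\mathbb{Q}}$, let $K = \mathbb{Q}(\lambda)$ and let $\mathfrak{p}$ be the prime of $K$ above $p$ coming from the embedding $K \hookrightarrow \overline{\mathbb{Q}}_p$. From $f(\lambda) = 0$, $|a_k|_p \leq 1$, and the ultrametric inequality, the tail estimate $|f_n(\lambda)|_p \leq |\lambda|_p^{b(n)}$ is immediate. Moreover, if $f_n(\lambda) = 0$ for two consecutive values of $n$, the relation $a_n \lambda^{b(n)} = f_{n+1}(\lambda) - f_n(\lambda) = 0$ forces $\lambda = 0$; so $S = \{n : f_n(\lambda) \neq 0\}$ is infinite. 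The key technical step, and the place where I expect to spend most effort, is the complementary lower bound: for $n \in S$, applying the product formula $\prod_v |f_n(\lambda)|_v^{d_v} = 1$ over the places of $K$ and bounding $|f_n(\lambda)|_v$ for each $v \neq \mathfrak{p}$---at archimedean places via hypothesis (3) $|a_k| \leq \kappa C^{b(k)}$, and at non-archimedean places via $|a_k|_v \leq 1$ together with the fact that only finitely many $v$ satisfy $|\lambda|_v > 1$---should yield
\[
|f_n(\lambda)|_p \geq C_3^{-b(n-1)}
\]
for a constant $C_3 > 1$ depending only on $K$, $\lambda$, $\kappa$, and $C$.

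Combining the tail and Liouville bounds along the infinite set $S$ then forces
\[
\frac{b(n)}{b(n-1)} \leq \frac{\log C_3}{-\log |\lambda|_p}
\]
for infinitely many $n$, directly contradicting the super-exponential growth of $b(n)$ and thus ruling out an algebraic $\lambda$. Combined with the first paragraph, this establishes the failure of the Weierstrass property in $\mathbb{Z}[[x]]$.
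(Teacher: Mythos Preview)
Your proposal is correct and follows the same overall architecture as the paper: produce a nonzero root $\lambda$ in the open $p$-adic unit disk via Weierstrass preparation, then run a Liouville-type argument on the truncations to contradict algebraicity using the super-exponential gap condition. The technical implementation differs in two places. For the lower bound on $|\Phi_N(\lambda)|_p$, you work in $K=\mathbb{Q}(\lambda)$ and invoke the product formula over all places of $K$, whereas the paper stays over $\mathbb{Z}$ by forming the integer resultant $B_N=\operatorname{Res}(P,\Phi_N)$ of $\Phi_N$ with the minimal polynomial $P$ of $\lambda$, bounding $|B_N|$ via Hadamard's inequality and $|B_N|_p$ via the B\'ezout identity evaluated at $\lambda$; these are standard interchangeable devices (the resultant is essentially a norm), and your version is arguably cleaner since it avoids the Sylvester matrix bookkeeping. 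For the ``not associate to a polynomial'' conclusion, your one-line observation that a unit of $\mathbb{Z}[[x]]$ has constant term $\pm 1$, hence is a unit of $\mathbb{Z}_p[[x]]$ with no zeros in the open disk, is more direct than the paper's route through a general proposition about Noetherian domains together with Krasner's lemma; the paper's machinery is there because it is reused for the positive-characteristic analogue and for more general finitely generated rings, but for $\mathbb{Z}$ itself your argument suffices. One small point: your handling of the factor of $x$ (noting that $x$ divides the Weierstrass polynomial exactly once since $b(0)=1$ and $a_0\neq 0$) is actually more careful than the paper's corresponding step.
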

We also prove a transcendence result of a similar flavor in positive characteristic (see Theorems \ref{thm:transcendencep} and \ref{rem:nonassociate}).  

Finally, we are able to use Theorem \ref{thm:main3} to prove the following general undecidability result.
\begin{theorem} Let $T$ be an infinite finitely generated commutative ring.
The problem of determining whether a computable power series \( f(x) \in T[[x]] \) is expressible as the product of a unit in $T[[x]]$ and a polynomial in $T[x]$ is undecidable.
\label{thm:undecidable}
\end{theorem}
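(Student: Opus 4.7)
The plan is to reduce the halting problem to the factorization decision problem. Given a Turing machine $M$, I will construct a computable power series $f_M(x) \in T[[x]]$ whose expressibility as a polynomial times a unit in $T[[x]]$ is equivalent to $M$ never halting on empty input; since non-halting is not recursive, this yields undecidability.

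Granting for the moment a computable power series $g(x) = \sum_n g_n x^n \in T[[x]]$ that is \emph{not} expressible as a polynomial times a unit, I define $f_M = \sum_n c_n x^n$ by the rule: to compute $c_n$, simulate $M$ for $n$ steps; if $M$ has not halted, set $c_n = 0$, and if $M$ halted at some step $k \le n$, set $c_n = g_{n-k}$. This is computable. If $M$ never halts, then $f_M = 0 = 0 \cdot 1$ is trivially a polynomial times a unit. If $M$ halts at step $k$, then $f_M = x^k g(x)$; a putative factorization $x^k g = p u$ with $p \in T[x]$ and $u \in T[[x]]^{\times}$ would force $x^k \mid p$ (modulo $x^k$ the image of $u$ is a unit in $T[x]/(x^k)$, forcing $\bar{p}=0$), and writing $p = x^k q$ yields $g = q u$, contradicting the choice of $g$. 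Hence $f_M$ factors if and only if $M$ never halts.

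It remains to produce the computable non-factorable $g$. Since $T$ is infinite, it has a minimal prime $\mathfrak{p}$ with $T/\mathfrak{p}$ infinite, and factorizations in $T[[x]]$ descend along the quotient map $T[[x]] \to (T/\mathfrak{p})[[x]]$, so we may assume $T$ is a finitely generated infinite integral domain. Consider first the case $\mathrm{char}(T) = 0$, so $\mathbb{Z} \subseteq T$. Since $T \otimes_{\mathbb{Z}} \mathbb{Q}$ is a nonzero finitely generated $\mathbb{Q}$-algebra, there is a ring homomorphism $\phi : T \to \overline{\mathbb{Q}}$. The image $\phi(T)$ is a finitely generated $\mathbb{Z}$-subalgebra of $\overline{\mathbb{Q}}$, hence contained in $\mathcal{O}_K[1/N]$ for some number field $K$ and positive integer $N$. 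Choose a rational prime $p \nmid N$ and a place $\mathfrak{P}$ of $K$ above $p$; then $\mathcal{O}_K[1/N] \hookrightarrow \mathcal{O}_{\overline{\mathbb{Q}}_p}$, and every element of $\mathcal{O}_K[1/N]^{\times}$ has $p$-adic absolute value exactly $1$. Let $g \in \mathbb{Z}[[x]]$ be a computable series from Theorem~\ref{thm:main3} for this prime $p$, viewed inside $T[[x]]$. If $g = p u$ in $T[[x]]$, applying $\phi$ yields $g = \phi(p) \phi(u)$ in $\mathcal{O}_{\overline{\mathbb{Q}}_p}[[x]]$; all coefficients of $\phi(u)$ have $p$-adic absolute value at most $1$ while its constant term has absolute value $1$, so the ultrametric inequality shows $\phi(u)$ does not vanish anywhere in the open unit disk of $\mathbb{C}_p$. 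Thus the transcendental zero $\lambda$ of $g$ from Theorem~\ref{thm:main3} satisfies $\phi(p)(\lambda) = 0$; since $\phi(p) \in \overline{\mathbb{Q}}[x]$, this forces $\lambda$ to be algebraic over $\mathbb{Q}$, contradicting its transcendence.

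If instead $\mathrm{char}(T) = p > 0$, the same strategy applies: $T$ contains $\mathbb{F}_p[t]$ for some element $t$ transcendental over $\mathbb{F}_p$, and one uses Theorem~\ref{thm:transcendencep} in place of Theorem~\ref{thm:main3}, replacing the $p$-adic setting by its $t$-adic function-field analogue. The principal difficulty in both cases is constructing the homomorphism $\phi$ whose image has controlled denominators, so that units of $T$ map to elements of local absolute value $1$; this is what allows the ``$\mathbb{Z}$-valued'' (respectively ``$\mathbb{F}_p[t]$-valued'') transcendence obstruction to persist under passage to the arbitrary finitely generated coefficient ring $T$.
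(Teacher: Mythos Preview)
Your reduction via the halting problem differs from the paper's reduction via Hilbert's tenth problem: the paper encodes a Diophantine equation $P$ into the exponent sequence $b_P(n)$ so that $b_P$ grows super-exponentially exactly when $P$ has no integer zero, whereas you fix a single computable non-factorable series $g$ and set $f_M$ equal to $0$ or $x^k g$ according to whether $M$ halts. Your reduction step is more direct; both approaches ultimately rest on the same core ingredient, a computable $g\in T[[x]]$ that is not a polynomial times a unit, and both extract this from the transcendence theorems. Your characteristic-zero construction of $g$ is correct: the key point is that the prime $p$ may be chosen \emph{after} the homomorphism $\phi:T\to\overline{\mathbb{Q}}$, so that $p\nmid N$ and every unit of $\phi(T)\subseteq\mathcal{O}_K[1/N]$ has $p$-adic absolute value one, whence $\phi(u)(\lambda)\neq0$ and the transcendence of $\lambda$ supplies the contradiction.

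The positive-characteristic case, however, has a genuine gap. The asymmetry with characteristic zero is that $\mathbb{F}_p[t]\subseteq T$ is not canonical: you must choose $t$ first, and your sketch does not rule out $t\in T^\times$ (in which case the constant term of $g$ is a unit and $g$ itself is a unit of $T[[x]]$), nor, even when $t$ is a non-unit, does it ensure that an $\mathbb{F}_p[t]$-algebra map $\phi:T\to\overline{\mathbb{F}_p(t)}$ exists with $\phi(T)$ free of $t$-adic denominators. You cannot simply postpone the choice of place as in characteristic zero, because $g$ has already been tied to the $t$-adic valuation via Theorem~\ref{thm:transcendencep}. The paper avoids this by passing to a one-dimensional quotient $S=T/P$ and Noether-normalizing so that $S$ is module-finite over $\mathbb{F}_p[t]$; going-up then guarantees $t$ is a non-unit in $S$, and Proposition~\ref{prop:alg} together with Theorem~\ref{rem:nonassociate} gives the obstruction directly in $S[[x]]$. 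Within your framework the repair would be to construct $\phi$ first and then replace $t$ by an irreducible $\pi\in\mathbb{F}_p[t]$ away from the finitely many bad places of $\phi(T)$, rebuilding $g$ for the $\pi$-adic valuation; Theorem~\ref{thm:transcendencep} goes through verbatim after this renaming.
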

This says there exists no general algorithm which takes as input a power series over $R$ with coefficients that can be computed with a Turing machine and which then decides whether such a factorization exists.  (See \S\ref{undecidable} for background and Theorem \ref{thm:undecidable} for the formal statement.) 

The outline of this paper is as follows.  In \S\ref{SWP} we give some short background on complete local rings and prove Theorem \ref{thm:main1}. In \S\ref{WP} we then prove Theorem \ref{thm:main2}. In \S\ref{countable} we prove that a dichotomy holds: for a countable ring $R$, either $R$ has the strong Weierstrass property or there are uncountably many associate classes in $R[[x]]$ that do not contain a polynomial. Then in \S\ref{transcendence} we prove a general transcendence result about zeros of $p$-adic power series, as well as a positive characteristic analogue of this result and use this to prove Theorem \ref{thm:main3} (see also Theorem \ref{rem:nonassociate}). Finally, in \S\ref{undecidable} we give some background on decision procedures and prove Theorem \ref{thm:undecidable}.  
\subsection{Notation}
Unless stated otherwise, all rings in this paper are commutative with a nonzero identity. The multiplicative group of units of $R$ is denoted $R^{\times}$ and the {\it associate class} of an element $r \in R$ is the set $R^{\times}r := \{ur: u \in R^{\times}\}$. For $r,r'\in R$, we say that $r$ is \emph{associate} to $r'$ in $R$ if $r\in r' R^{\times}$.  Given an integral domain $R$, we let ${\rm Frac}(R)$ denote its field of fractions.
\section{The strong Weierstrass property}
 \label{SWP}
 We recall that if \( I \) is an ideal of a ring \( R \), then we can form the \( I \)-\emph{adic completion} of \( R \), denoted \( \widehat{R} \), as the subring of the direct product \( \prod_{n \geq 1} R/I^n \) consisting of sequences \( (r_n + I^n)_{n \geq 1} \) such that for all \( n \geq 2 \), we have \( r_n - r_{n-1} \in I^{n-1} \). This ring inherits a natural topology, called the \( I \)-\emph{adic topology}, where a basis of open neighborhoods of an element \( \mathbf{r} = (r_n + I^n)_{n \geq 1} \) is given by the sets
\[
U_m(\mathbf{r}) := \left\{ (s_n + I^n)_{n \geq 1} \in \widehat{R} : s_n - r_n \in I^m \text{ for all } n \geq 1 \right\},
\]
where $m$ ranges over all positive integers.

The ring \( \widehat{R} \) is complete with respect to this topology; that is, all Cauchy sequences in \( \widehat{R} \) converge. Moreover, there is a natural homomorphism \( \phi : R \to \widehat{R} \) given by \( \phi(r) = (r + I^n)_{n \geq 1} \), whose kernel is \( \bigcap_{n \geq 1} I^n \). In particular, \( \phi \) is injective when this intersection is zero, and the image of \( R \) is a dense subring of \( \widehat{R} \). A local ring \( R \) is called \emph{complete} if \( R \to \widehat{R} \) is an isomorphism when we complete \( R \) with respect to its unique maximal ideal.

We refer the reader to \cite[Chapter 7]{Eisenbud} for more details about completions of rings.

\vspace{0.5em}

We now recall the statement of the Weierstrass Preparation Theorem. A proof can be found in Lang \cite[Theorem 9.2]{Lang}.

\begin{theorem}[Weierstrass Preparation Theorem]\label{thm:WPT}
Let \( R \) be a complete local ring with maximal ideal \( P \), and let
\[
f(x) = \sum_{i=0}^\infty a_i x^i \in R[[x]]
\]
be a power series such that \( a_0, \ldots, a_{n-1} \in P \) and \( a_n \notin P \). Then \( f(x) \) can be expressed uniquely as the product of a monic polynomial of degree \( n \) in \( R[x] \) and a unit in \( R[[x]] \).
\end{theorem}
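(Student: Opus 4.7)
The plan is to prove this by a contraction-mapping argument in the $P$-adic topology on $R[[x]]$, which is the classical approach to Weierstrass preparation in the complete local setting. I would first decompose $f$ as $f(x) = g(x) + x^n h(x)$, where $g(x) = \sum_{i=0}^{n-1} a_i x^i$ has all coefficients in $P$ and $h(x) = \sum_{j\ge 0} a_{n+j} x^j$ has constant term $a_n \notin P$; since $R$ is local, $h$ is a unit in $R[[x]]$.

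Next I would seek the factorization $f = p \cdot u$ with $p$ monic of degree $n$ and $u \in R[[x]]^{\times}$ by writing $u^{-1} = h^{-1}(1+w)$ for an unknown $w \in R[[x]]$. A direct computation then gives $u^{-1} f = (1+w)x^n + h^{-1}(1+w)g$, and imposing that $u^{-1} f$ equal a monic polynomial of degree $n$ amounts to cancelling all terms of degree $\ge n$ in $h^{-1}(1+w)g$. Denoting by $[\,\cdot\,]_{\ge n}$ the $R$-module projection $R[[x]] \to x^n R[[x]]$, this reduces the problem to the fixed-point equation $w = \Phi(w) := -x^{-n}\bigl[h^{-1}(1+w)g\bigr]_{\ge n}$.

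Then I would show $\Phi$ is a contraction with respect to the $P$-adic filtration: because $g \in P[[x]]$, if $w_1 - w_2 \in P^k R[[x]]$ then $\Phi(w_1) - \Phi(w_2) \in P^{k+1} R[[x]]$. Iterating $\Phi$ from $w_0 = 0$ produces a sequence whose coefficients form Cauchy sequences in $R$, and by completeness of $R$ each coefficient converges to a limit in $R$; the resulting power series $w$ satisfies $w = \Phi(w)$ by continuity, and back-substitution yields the desired $p$ and $u$. For the uniqueness half, I would establish Weierstrass division by essentially the same fixed-point argument---every $F \in R[[x]]$ admits a unique expression $F = Qf + \rho$ with $\deg \rho < n$---and then apply this to $F = x^n$ to pin down both $p$ and $u$ intrinsically from $f$.

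The main obstacle will be justifying convergence of the iterates rigorously: although $R$ is $P$-adically complete, the ring $R[[x]]$ is generally not complete in the $P$-adic topology (we have not imposed $x$ in the filtration), so I have to argue convergence coefficient-by-coefficient in $R$ rather than invoking completeness of $R[[x]]$ directly, and then verify that the limit power series actually satisfies the fixed-point identity before using it to recover $p$ and $u$.
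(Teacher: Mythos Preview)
The paper does not give its own proof of this classical result; it simply refers to Lang \cite[Theorem 9.2]{Lang}. Your contraction-mapping argument is correct and is essentially the standard proof found there (Weierstrass division by successive approximation, with preparation and uniqueness read off from division), and your caution about arguing convergence coefficient-by-coefficient in $R$ rather than in $R[[x]]$ globally is exactly the right subtlety to flag.
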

This result allows one to reduce the study of analytic objects to algebraic ones in many settings. This is a core principle behind rigid analytic geometry.
We note that the unique monic polynomial we obtain in the statement of the Weierstrass Preparation Theorem is often called a \emph{Weierstrass polynomial}.

In light of this result, we give the following definition.
\begin{definition} \label{defn:Weierstrass} Let $R$ be a ring.  We say that $R$ has the \emph{strong Weierstrass property} if every power series over $R$ factors as a product of a polynomial and a unit of $R[[x]]$; we say that $R$ has the \emph{Weierstrass property} if, whenever $f(x)=a_0+a_1x+\cdots  \in R[[x]]$ has the property that $a_0,\ldots ,a_n$ generate the unit ideal, there is a monic polynomial $p(x)\in R[x]$ of degree $n$ and a unit $u(x)$ of $R[[x]]$ such that $f(x)=p(x)u(x)$.
\end{definition}
We note that the stronger variant is not \emph{a priori} stronger in a technical sense, but only in the sense that it provides a factorization of \emph{all} power series as the product of a polynomial and a unit.   Nevertheless, Theorems \ref{thm:main1} and \ref{thm:main2} show that the strong variant implies the ordinary variant (see Remark \ref{rem:strong}).
\vspace{0.5em}

Unfortunately, the Weierstrass Preparation Theorem does not guarantee that \emph{every} power series over a complete local ring is associate to a polynomial. However, in the case that \( R \) is a complete local ring whose maximal ideal is principal, this property follows quickly.

\begin{lemma}\label{CLPIPSAP}
Let \( R \) be a complete local ring with maximal ideal \( P \). If \( P \) is principal, then every power series in \( R[[x]] \) is associate to a polynomial.
\end{lemma}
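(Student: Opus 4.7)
The plan is to strip off a maximal power of a generator of $P$ from $f$ and then apply the Weierstrass Preparation Theorem (Theorem~\ref{thm:WPT}) to what remains. Fix a generator $\pi$ of $P$ and write $f(x)=\sum a_i x^i$. If $f=0$, then $f$ is itself a polynomial, so assume $f\neq 0$. If some coefficient $a_i$ is a unit (equivalently, lies outside $P$, since $R$ is local), then Theorem~\ref{thm:WPT} immediately writes $f=p(x)u(x)$ with $p\in R[x]$ monic and $u\in R[[x]]^{\times}$, and we are done.

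Otherwise every $a_i$ lies in $P=(\pi)$, so for each $i$ I pick $b_i\in R$ with $a_i=\pi b_i$ and set $f_1(x)=\sum b_i x^i$, giving $f=\pi f_1$. Iterating this construction: at step $k$, either $f_k$ has some coefficient outside $P$, in which case Theorem~\ref{thm:WPT} yields $f_k=p(x)u(x)$ and hence $f=\pi^k p(x)\cdot u(x)$, exhibiting $f$ as a polynomial times a unit; or else every coefficient of $f_k$ again lies in $P$ and we may pass to $f_{k+1}$.

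The step I expect to be the main obstacle is showing that this descent actually terminates. Suppose it does not; then $f=\pi^k f_k$ for every $k\ge 0$, so every coefficient of $f$ belongs to $\bigcap_{k\ge 1} P^k$. Because $R$ is complete in the sense defined in this paper, the natural map $R\to\widehat{R}$ is an isomorphism and in particular injective, and its kernel is precisely $\bigcap_{k\ge 1}P^k$. Thus this intersection is zero, forcing $f=0$ and contradicting our standing assumption. Hence the iteration must stop and the required factorization is produced. One minor technical point to keep in mind is that when $\pi$ is a zero-divisor (as in the Artinian local principal ideal ring case), the elements $b_i$ with $a_i=\pi b_i$ are not uniquely determined, but any coefficient-wise choice works, since the argument only requires the equality $a_i=\pi b_i$ in $R$ rather than a canonical lift.
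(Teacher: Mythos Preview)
Your proof is correct and follows the same strategy as the paper: pull out a maximal power of a generator of $P$ from $f$, then apply the Weierstrass Preparation Theorem to what remains. The only difference is that the paper extracts the power in one step by citing a result of Bourbaki to conclude that the ideal generated by the coefficients equals $(\pi^n)$ for some $n$, whereas you achieve the same end by iterated division and direct appeal to $\bigcap_k P^k=0$ for termination.
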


\begin{proof}
Let \( f(x) = a_0 + a_1 x + a_2 x^2 + \cdots \in R[[x]] \) be a nonzero power series, and let \( I \) denote the ideal generated by the coefficients \( a_0, a_1, \ldots \). Then since $R$ is complete, the intersection of the powers of $P$ is zero and so by \cite[Chapter VI, \S 1.4, Proposition 2]{Bourbaki}, \( I = (b^n) \) for some \( n \geq 0 \), where \( b \) is a generator for \( P \). Thus we can write \( f(x) = b^n g(x) \), where \( g(x) \in R[[x]] \) has at least one coefficient not in \( P \). Then \( g(x) \) is associate to a Weierstrass polynomial by the Weierstrass Preparation Theorem, so we can write \( g(x) = p(x) u(x) \), where \( p(x) \in R[x] \) and \( u(x) \in R[[x]]^{\times} \). It follows that
\[
f(x) = b^n p(x) u(x),
\]
which expresses \( f(x) \) as a product of a polynomial and a unit.
\end{proof}

The next result shows that the class of rings with the strong Weierstrass property has certain closure properties, and will be useful in obtaining Theorem \ref{thm:main1}.

\begin{lemma}\label{prop:1.3}
The class of rings with the strong Weierstrass property is closed under homomorphic images and formation of finite direct products.
\end{lemma}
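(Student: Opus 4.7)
The plan is to handle each closure property in turn, and both should follow from straightforward lifting/projection arguments. The key observation in both cases is that the power series ring construction behaves well with respect to surjections and products, and that the unit condition in $R[[x]]^\times$ is controlled entirely by the constant term.

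For homomorphic images, I would start with a surjection $\phi\colon R \twoheadrightarrow S$ and a power series $g(x)=\sum_{i\ge 0} b_i x^i \in S[[x]]$. Using surjectivity of $\phi$, pick preimages $a_i\in R$ of $b_i$ and form $f(x)=\sum_{i\ge 0} a_i x^i\in R[[x]]$. By the strong Weierstrass property for $R$, write $f(x)=p(x)u(x)$ with $p(x)\in R[x]$ and $u(x)\in R[[x]]^\times$. Then apply $\phi$ coefficientwise; this extends to a ring homomorphism $R[[x]]\to S[[x]]$ carrying $f$ to $g$, $p$ to some $\bar p\in S[x]$, and $u$ to some $\bar u\in S[[x]]$. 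Since $u(0)\in R^\times$ forces $\phi(u(0))\in S^\times$, the image $\bar u$ has unit constant term and hence lies in $S[[x]]^\times$. Thus $g(x)=\bar p(x)\bar u(x)$ is the desired factorization.

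For finite direct products, by induction it suffices to treat two factors. Given rings $R_1,R_2$ with the strong Weierstrass property and a power series $f(x)\in (R_1\times R_2)[[x]]$, use the canonical isomorphism $(R_1\times R_2)[[x]]\cong R_1[[x]]\times R_2[[x]]$ to write $f$ as a pair $(f_1,f_2)$. Apply the strong Weierstrass property in each coordinate to get $f_i=p_i u_i$ with $p_i\in R_i[x]$ and $u_i\in R_i[[x]]^\times$. Then $(p_1,p_2)$ corresponds to a polynomial in $(R_1\times R_2)[x]$ and $(u_1,u_2)$ corresponds to a unit of $(R_1\times R_2)[[x]]$ since its constant term $(u_1(0),u_2(0))$ is a unit of $R_1\times R_2$, yielding the factorization.

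There is no real obstacle here; the only thing worth pausing over is verifying that $\bar u$ remains a unit in $S[[x]]$ after passing through the quotient, which reduces to the standard fact that a power series is a unit iff its constant term is a unit in the coefficient ring. Once that is noted, both closure properties are essentially formal.
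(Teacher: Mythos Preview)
Your proposal is correct and follows essentially the same approach as the paper's proof: lift a power series through the quotient map (respectively, project through the product isomorphism $(R_1\times R_2)[[x]]\cong R_1[[x]]\times R_2[[x]]$), factor there, and push the factorization back. You even spell out explicitly why the image of a unit remains a unit (via the constant-term criterion), which the paper leaves implicit.
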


\begin{proof}
Suppose \( R \) has the strong Weierstrass property and let \( I \) be a proper ideal of \( R \). Let \( f(x) \in (R/I)[[x]] \). We may lift \( f(x) \) to a power series \( g(x) \in R[[x]] \) such that \( g(x) \mod I = f(x) \). By assumption, \( g(x) = p(x) u(x) \) with \( p(x) \in R[x] \) and \( u(x) \in R[[x]]^{\times} \). Reducing modulo \( I \), we obtain a factorization \( f(x) = \overline{p(x)} \cdot \overline{u(x)} \), and since polynomials and units descend under the canonical projection \( R[[x]] \to (R/I)[[x]] \), it follows that \( R/I \) also has the strong Weierstrass property.

To show closure under finite direct products, suppose \( R_1 \) and \( R_2 \) have the strong Weierstrass property. Then so does \( R_1 \times R_2 \), since
\[
(R_1 \times R_2)[[x]] \cong R_1[[x]] \times R_2[[x]].
\]
Given \( (f_1(x), f_2(x)) \in R_1[[x]] \times R_2[[x]] \), by assumption there exist polynomials \( p_i(x) \in R_i[x] \) and units \( u_i(x) \in R_i[[x]]^{\times} \) such that \( f_i(x) = p_i(x) u_i(x) \) for \( i = 1, 2 \). Thus,
\[
(f_1(x), f_2(x)) = (p_1(x), p_2(x)) \cdot (u_1(x), u_2(x)),
\]
which is a factorization into a polynomial and a unit in \( (R_1 \times R_2)[[x]] \).
\end{proof}
 
 We now investigate the associate classes of a power series ring and show that the number of classes that do not contain a polynomial representative is uncountable under general conditions.

\begin{lemma}
Let \( R \) be a ring that is not Noetherian. Then \( R \) has an uncountable set of pairwise non-associate power series, none of which are associate to a polynomial.
\label{lem:noether}
\end{lemma}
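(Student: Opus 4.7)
The plan is to construct, for each infinite subset $S \subseteq \N$, a power series $f_S \in R[[x]]$, and to show that the $2^{\aleph_0}$ series so obtained are pairwise non-associate in $R[[x]]$ while none of them is associate to a polynomial. Since $R$ is not Noetherian, some ideal $I$ of $R$ is not finitely generated, so I can choose inductively $a_1, a_2, \ldots \in I$ with $a_{n+1} \notin (a_1, \ldots, a_n)$ for every $n$; equivalently, the chain $(a_1) \subsetneq (a_1, a_2) \subsetneq \cdots$ is strictly ascending. For each infinite $S \subseteq \N$ I will then set $f_S(x) := \sum_{n \in S} a_n x^n$.

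The key distinguishing invariant will be the \emph{truncated content}: for $f(x) = \sum_{i \geq 0} b_i x^i \in R[[x]]$ and $k \geq 0$, let $\cont_k(f) := (b_0, b_1, \ldots, b_k)$. If $f = g\, u$ with $u \in R[[x]]^{\times}$, then convolution shows that each coefficient of $f$ of degree $\leq k$ is an $R$-linear combination of the coefficients of $g$ of degree $\leq k$, so $\cont_k(f) \subseteq \cont_k(g)$; applying the same argument to $g = f u^{-1}$ yields $\cont_k(f) = \cont_k(g)$. Hence the whole sequence $(\cont_k(f))_{k \geq 0}$, and in particular $\cont(f) := \bigcup_k \cont_k(f)$, are invariants of the associate class of $f$. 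Since the content of any polynomial is finitely generated, any power series associate to a polynomial must have finitely generated content.

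It then remains to evaluate these invariants on the $f_S$. Directly, $\cont_k(f_S) = (a_n : n \in S,\ n \leq k)$. First, if $S$ is infinite, then $\cont(f_S) = (a_n : n \in S)$ is \emph{not} finitely generated: a hypothetical finite generating subset would lie in some $(a_1, \ldots, a_N)$, yet picking $m \in S$ with $m > N$ would give $a_m \in (a_1, \ldots, a_N) \subseteq (a_1, \ldots, a_{m-1})$, contradicting the chain condition. So $f_S$ is not associate to any polynomial. Second, if $S \neq T$ are both infinite, I may pick (after possibly swapping) some $n \in S \setminus T$; then $a_n \in \cont_n(f_S)$, while $\cont_n(f_T) \subseteq (a_m : m \leq n-1)$ since $n \notin T$, and the chain condition gives $a_n \notin (a_1, \ldots, a_{n-1})$. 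Thus $\cont_n(f_S) \neq \cont_n(f_T)$ and $f_S, f_T$ are not associate. Since there are uncountably many infinite subsets of $\N$, this produces the required family.

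There is no serious obstacle here once one notices that the entire chain of truncated content ideals—rather than merely the total content—is an associate-class invariant: this refinement is exactly what lets the chain condition on the $a_n$'s distinguish two arbitrary infinite subsets $S, T$ of $\N$ whose total content ideals $(a_n : n \in S)$ and $(a_n : n \in T)$ might \emph{a priori} coincide.
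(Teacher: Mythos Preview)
Your proof is correct and takes essentially the same approach as the paper: both construct the series $\sum_{n\in S} a_n x^n$ for infinite $S$ using a sequence $(a_n)$ with $a_n\notin (a_1,\ldots,a_{n-1})$, and distinguish associate classes via the observation that multiplying by a unit preserves the ideal generated by the initial coefficients. Your packaging of this observation as the invariance of the truncated content $\cont_k(f)$ is a clean rephrasing of the paper's direct coefficient computation, and your content argument for ``not associate to a polynomial'' is a mild variant of the paper's direct nonvanishing check.
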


\begin{proof}
Let \( a_0, a_1, \ldots \in R \) be a sequence such that \( a_n \notin (a_0, \ldots, a_{n-1}) \) for all \( n \geq 1 \). We note that such a sequence necessarily exists, since $R$ is not Noetherian. For each sequence \( \mathbf{e} = \{ \epsilon_i \}_{i \geq 0} \in \{0,1\}^\mathbb{N} \) such that \( \epsilon_i = 1 \) for infinitely many \( i \), define the power series
\[
f_{\mathbf{e}}(x) = \sum_{n \geq 0} a_n \epsilon_n x^n.
\]
We claim that for distinct sequences \( \mathbf{e} \neq \mathbf{e}' \) satisfying the above condition, the power series \( f_{\mathbf{e}}(x) \) and \( f_{\mathbf{e}'}(x) \) are not associate, and neither is associate to a polynomial.

To prove the first claim, suppose \( \epsilon_n = 1 \), \( \epsilon_n' = 0 \) for some \( n \). Let \( u(x) = u_0 + u_1 x + \cdots \in R[[x]]^\times \). The coefficient of \( x^n \) in \( f_{\mathbf{e}}(x) u(x) \) is
\[
a_n u_0 + a_{n-1} u_1 + \cdots + a_0 u_n.
\]
Since \( u_0 \) is a unit and \( a_n \notin (a_0, \ldots, a_{n-1}) \), this coefficient is not in \( (a_0, \ldots, a_{n-1}) \). But the coefficient of \( x^n \) in \( f_{\mathbf{e}'}(x) \) is zero, which lies in \( (a_0, \ldots, a_{n-1}) \). So \( f_{\mathbf{e}}(x) \) and \( f_{\mathbf{e}'}(x) \) cannot be associates.

To see that \( f_{\mathbf{e}}(x) \) is not associate to a polynomial, note that for any unit \( u(x) \in R[[x]]^\times \), the coefficient of \( x^n \) in \( f_{\mathbf{e}}(x) u(x) \) is nonzero whenever \( \epsilon_n = 1 \). Since \( \epsilon_n = 1 \) infinitely often, the product is not a polynomial.
\end{proof}

\vspace{1em}

For the next result, given a commutative ring $R$, we shall say that a power series \( f(x) \in R[[x]] \) is \emph{rational} if there exist polynomials \( p(x), q(x) \in R[x] \), with \( q(x) \) having constant term $1$, such that $f(x)q(x)=p(x)$.\footnote{This definition of rational series is non-standard, and it is often the case that one instead merely insists that the coefficients of $f(x)$ satisfy a non-trivial linear recurrence with coefficients in $R$. A result of Fatou shows that this definition is equivalent to ours when $R$ is a UFD (cf. Stanley \cite[p. 275]{StanleyVol2}).} Since $q(x)$ has constant term $1$, it is a unit in $R[[x]]$ and so a rational series is automatically associate to a polynomial under this definition. A power series that is not rational is called \emph{irrational}.

Equating coefficients of \( f(x) q(x) = p(x) \), we see that \( f(x) = \sum_n a_n x^n \in R[[x]] \) is rational if and only if there exist \( d \geq 1 \) and \( q_0, \ldots, q_d \in R \), with \( q_0=1\) and \( q_d \) nonzero and \( (q_0, \ldots, q_d) = R \), such that
\[
\sum_{i=0}^d a_{n-i} q_i = 0
\]
for all \( n \) sufficiently large.

In particular, if \( \Psi(x) = \sum a_n x^n \in R[[x]] \) has all coefficients in \( \{0,1\} \), then \( \Psi(x) \) is rational if and only if the sequence \( \{a_n\} \) is eventually periodic. Indeed, if \( a_n = a_{n+d} \) for all large \( n \), then \( \Psi(x)(1 - x^d) \in R[x] \). Conversely, if \( \Psi(x) \) is rational, the recurrence relation above and the fact that $a_n\in \{0,1\}$ and $q_0=1$ implies that the tail of \( \{a_n\} \) is determined by a finite amount of data, and since the sequence is \( \{0,1\} \)-valued, it must eventually repeat.

Hence, the set \( \mathcal{Z} \) of irrational power series in \( R[[x]] \) with coefficients in \( \{0,1\} \) is uncountable, since the set of eventually periodic $\{0,1\}$-sequences is countable.

\begin{lemma}\label{prop:PIR}
Let \( R \) be a ring that is not a principal ideal ring. Then \( R[[x]] \) contains an uncountable family of pairwise non-associate power series, none of which are associate to a polynomial.
\end{lemma}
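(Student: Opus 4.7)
First, I would apply Lemma~\ref{lem:noether} to dispose of the case that $R$ is not Noetherian, so henceforth assume $R$ is Noetherian but not a principal ideal ring. From such an $R$ I extract a two-generated non-principal ideal as follows: take any non-principal ideal $I$, write it with a minimal generating set $g_1,\dots,g_r$ (so $r\ge 2$), and observe that $(g_1,g_2)$ is itself non-principal---otherwise $(g_1,g_2)=(h)$ would let me replace two generators of $I$ by one and contradict the minimality of $r$. So fix $a,b\in R$ with $(a,b)$ non-principal.

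For each $\Psi(x)=\sum_{n\ge 0}e_n x^n\in\mathcal{Z}$, define
\[
f_\Psi(x):=a + b\,x\,\Psi(x) \;=\; a + \sum_{n\ge 1} b\,e_{n-1}\,x^n \in R[[x]].
\]
The main verification is carried out after reducing modulo $\mathfrak{m}^2$ for a well-chosen maximal ideal $\mathfrak{m}\supseteq(a,b)$ with the property that $\bar a$ and $\bar b$ are linearly independent in the cotangent space $\mathfrak{m}/\mathfrak{m}^2$ over the residue field $k:=R/\mathfrak{m}$---equivalently, that the local ideal $(a,b)R_{\mathfrak{m}}$ is non-principal. In $R/\mathfrak{m}^2$ the maximal ideal $N:=\mathfrak{m}/\mathfrak{m}^2$ squares to zero, so $N$ carries the structure of a $k$-vector space, and multiplication by any $\bar r\in R/\mathfrak{m}^2$ on $N$ factors through its image $\tilde r\in k$. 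A direct expansion then shows that the coefficient of $x^n$ in $\bar f_\Psi\cdot\bar u$, for any unit $\bar u\in(R/\mathfrak{m}^2)[[x]]^\times$, equals
\[
\tilde u_n\,\bar a \;+\; \Bigl(\,\sum_{k=1}^{n} e_{k-1}\,\tilde u_{n-k}\Bigr)\bar b \;\in\; N,
\]
where $\tilde u_m$ is the image of $\bar u_m$ in $k$. Linear independence of $\bar a,\bar b$ separates the two summands: setting the product equal to $\bar f_{\Psi'}$ forces $\tilde u_0=1$, $\tilde u_n=0$ for $n\ge 1$, and hence $e_n=e_n'$ for all $n$, so $\Psi=\Psi'$; setting it equal to a polynomial forces $\alpha(x):=\sum_n\tilde u_n x^n$ to be a polynomial in $k[x]$ with $\alpha(0)\neq 0$, together with $\Psi(x)\,\alpha(x)\in k[x]$, so $\Psi(x)=\bigl(\Psi(x)\alpha(x)\bigr)/\alpha(x)$ is rational in $k[[x]]$, contradicting $\Psi\in\mathcal{Z}$.

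The delicate point is ensuring that such an $\mathfrak{m}$ exists for every Noetherian non-PIR. When some maximal ideal $\mathfrak{m}$ has $\dim_k\mathfrak{m}/\mathfrak{m}^2\ge 2$ (for instance whenever $R$ has Krull dimension $\ge 2$ or a non-regular maximal point), the argument above applies directly. The residual case is when every $R_\mathfrak{m}$ is a principal ideal ring, so that $R$ is ``locally a PIR'' yet globally not---the Dedekind-domain-but-not-PID scenario. Here the mod-$\mathfrak{m}^2$ reduction is too coarse to separate different $f_\Psi$'s, and this is the main obstacle. I would handle this case by passing instead to the $\mathfrak{m}$-adic completion $\widehat{R_\mathfrak{m}}$ (a complete DVR); after factoring out an appropriate power of a uniformizer to meet the hypothesis of Theorem~\ref{thm:WPT}, each $f_\Psi$ acquires a Weierstrass polynomial $D_\Psi\in\widehat{R_\mathfrak{m}}[x]$, and since associates in $R[[x]]$ stay associate in $\widehat{R_\mathfrak{m}}[[x]]$ while monic Weierstrass polynomials of associate series must coincide, the task reduces to showing that $\Psi\mapsto D_\Psi$ has uncountable image---a transcendence-flavored statement about roots of $f_\Psi$.
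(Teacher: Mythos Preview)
Your mod-$\mathfrak{m}^2$ calculation is essentially the paper's proof (with the cosmetic change $a+bx\Psi$ in place of $a+b\Psi$), and it works once $a,b$ have independent images in some cotangent space $\mathfrak{m}/\mathfrak{m}^2$. The paper, however, arrives at that situation by a different route: rather than extracting $(a,b)$ from an arbitrary non-principal ideal and then hunting for a compatible $\mathfrak{m}$, it invokes Kaplansky's theorem to produce a non-principal \emph{maximal} ideal $P$ directly, asserts $\dim_{R/P}P/P^2\ge 2$ via Nakayama, and then simply chooses $a,b\in P$ lifting two independent cotangent vectors. With that order of choices there is no case split and no ``residual'' regime to treat separately.

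Your proposal, as written, has a genuine gap. First, even when some maximal ideal does have two-dimensional cotangent space, your pre-chosen $(a,b)$ need not map there independently (and your ``equivalently, $(a,b)R_{\mathfrak m}$ non-principal'' is not an equivalence: $a=s^2,\ b=t^2$ in $k[[s,t]]$ generate a non-principal ideal yet both vanish in $\mathfrak m/\mathfrak m^2$). Second, the ``locally a PIR but not globally'' case is only sketched, and the suggested completion argument fails outright: in the DVR $\widehat{R_{\mathfrak m}}$ the ideal $(a,b)$ becomes principal, say $a=u\pi^{i}$, $b=v\pi^{j}$ with $u,v$ units, and whenever $i\le j$ one has $f_\Psi=\pi^{i}\bigl(u+v\pi^{\,j-i}x\Psi(x)\bigr)$ with the bracketed factor a unit of $\widehat{R_{\mathfrak m}}[[x]]$; hence all $f_\Psi$ are associate to $\pi^{i}$ there and $\Psi\mapsto D_\Psi$ is constant, not of uncountable image. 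Even in the complementary range you give no argument that distinct $\Psi$ produce distinct Weierstrass polynomials, nor that Weierstrass data in $\widehat{R_{\mathfrak m}}[[x]]$ controls being associate to a polynomial back in $R[[x]]$. Your instinct that the Dedekind scenario is delicate is not unreasonable, but the paper's remedy is to select the maximal ideal first and the elements $a,b$ afterwards, not to pass to a completion.
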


\begin{proof}
If \( R \) is not Noetherian, the result follows from Lemma~\ref{lem:noether}. Otherwise, since \( R \) is not a principal ideal ring, by a result of Kaplansky \cite[Theorem 12.3]{Kaplansky}, \( R \) has a maximal ideal \( P \) that is not principal. By a version of Nakayama's lemma (see \cite[Theorem 2.3]{MatsumuraCommutativeRingTheory}), the dimension of $P/P^2$ is as an $R/P$-vector space is equal to the size of a minimal generating set for $P$ as an ideal, and hence we can choose \( a, b \in P \) such that their images in \( P/P^2 \) are \( R/P \)-linearly independent.

Let \( \mathcal{Z} \) be the (uncountable) set of irrational power series in \( R[[x]] \) with coefficients in \( \{0,1\} \), as above. For each \( \Psi(x) \in \mathcal{Z} \), define \( f_\Psi(x) := a + b \Psi(x) \in R[[x]] \).  We claim that the elements of the uncountable set of power series
$$\{ f_{\Psi}(x)\colon \Psi(x)\in \mathcal{Z}\}$$ are pairwise non-associate.  To see this, suppose that \( \Psi \neq \Phi \in \mathcal{Z} \). If \( f_\Psi(x) \) and \( f_\Phi(x) \) were associate, we would have \( f_\Phi(x) = f_\Psi(x) h(x) \) for some unit \( h(x) \in R[[x]]^\times \), hence
\[
 (a+b\Phi(x)) =(a + b \Psi(x)) h(x).
\]
Equivalently, we have
$a(1-h(x)) = b(\Psi(x) h(x)-\Phi(x))$.
 Working mod \( P^2 \), we get
\[
a (1- h(x)) = b(\Psi(x)h(x) - \Phi(x)) \pmod{P^2}.
\]
Since \( a, b \in P \) and \( a + P^2, b + P^2 \) are linearly independent in \( P/P^2 \), we conclude that \( h(x)-1, \Psi(x)h(x) - \Phi(x) \in P[[x]] \).  In particular, this implies that $\Psi(x)$ and $\Phi(x)$ are equivalent mod $P$, a contradiction, since by construction \( \Psi(x) - \Phi(x) \) has some coefficient in \( \{\pm1\} \).

Now suppose \( f_\Psi(x) \) is associate to a polynomial \( p(x) \in R[x] \). Then \( f_\Psi(x) = p(x) u(x) \) for some unit \( u(x) \in R[[x]]^\times \). Then the coefficients of \( p(x) \) lie in the ideal \( (a,b) \), so write \( p(x) = a p_0(x) + b p_1(x) \). Then modulo \( P^2 \):
\[
a + b \Psi(x) = a p_0(x) u(x) + b p_1(x) u(x).
\]
Rewriting, we obtain:
\[
a (1 - p_0(x) u(x)) = b (p_1(x) u(x) - \Psi(x)) \pmod{P^2}.
\]
Again, by the linear independence of \( a, b \) mod \( P^2 \), both sides must vanish separately:
\[
p_0(x) u(x) \equiv 1 \pmod{P}, \quad p_1(x) u(x) \equiv \Psi(x) \pmod{P}.
\]
In particular, after scaling $p_0$ and $p_1$ by an appropriate unit of $R$ mod $P$ and scaling $u(x)$ by the inverse of this unit mod $P$, we may assume that $p_0$ has constant term that is $1$ mod $P$.

Multiplying the second congruence by \( p_0(x) \), we get:
\[
p_1(x) \equiv p_0(x) \Psi(x) \pmod{P} \]
and so \[ \Psi(x) \equiv \frac{p_1(x)}{p_0(x)} \pmod{P}.
\]
But this implies \( \Psi(x) \) is rational mod \( P \), a contradiction since \( \Psi \in \mathcal{Z} \). Hence \( f_\Psi(x) \) is not associate to a polynomial.
\end{proof}
We will now prove a key proposition that a principal ideal domain (and we include fields among principal ideal domains) that has either the Weierstrass property or its strong variant is complete. To do this, we first require a technical lemma.  We recall that if $L$ is a finite extension of a field $K$ that is complete with respect to a discrete valuation $v$, then $L$ is complete and there is a unique extension of $v$ to $L$ (see Serre \cite[Chapter II, \S2, Prop. 3 \& Cor. 2]{Serre}).
\begin{lemma} Let $R$ be a principal ideal domain that is not a field, let $P$ be a maximal ideal of $R$, let $\widehat{R}$ be the $P$-adic completion of $R$, and let $L$ be a finite extension of ${\rm Frac}(\widehat{R})$.  Suppose that $\beta\in P\widehat{R}$ is nonzero and suppose that $\beta_1,\ldots ,\beta_s\in L\setminus \{\beta\}$ have the property that they each lie in the maximal ideal of the unique valuation ring of $L$ that extends the valuation subring $\widehat{R}$ of ${\rm Frac}(\widehat{R})$. Then there exists $f(x)\in R[[x]]$ such that $f(\beta)=0$ and $f'(\beta),f(\beta_1),\ldots ,f(\beta_s)$ are all nonzero. 
\label{lem:technical}
\end{lemma}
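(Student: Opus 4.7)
Let $\pi$ generate $P$, let $v$ denote the $P$-adic valuation on $\widehat{R}$ normalized by $v(\pi)=1$, and let $w$ denote its unique extension to $L$. Set $k:=v(\beta)\ge 1$, write $\beta=\pi^{k}\omega$ with $\omega\in\widehat{R}^{\times}$, and put $\mu_i:=w(\beta_i)>0$. The plan is to construct $f$ as the coefficientwise limit in $R[[x]]$ of polynomials $f_0,f_1,\ldots\in R[x]$ produced by an iterative monomial-correction procedure beginning with $f_0(x):=x$. The essential inputs are the density of $R$ in $\widehat{R}$ (equivalently, the isomorphism $R/\pi^{k}R\cong\widehat{R}/\pi^{k}\widehat{R}$) and the ultrametric structure of $v$ and $w$.

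At step $n$ assume inductively that $t_n:=v(f_n(\beta))\ge(n+1)k$; this holds for $n=0$ since $t_0=k$. Write $f_n(\beta)=\pi^{t_n}u_n$ with $u_n\in\widehat{R}^{\times}$. Choose an integer $m_n\in\{0,1,\ldots,\lfloor t_n/k\rfloor\}$ distinct from the previously chosen $m_0,\ldots,m_{n-1}$ (with $m_0:=0$); this is possible because $\lfloor t_n/k\rfloor+1\ge n+2$ strictly exceeds $n$. Pick $c_n'\in R$ with $c_n'\equiv -u_n\omega^{-m_n}\pmod{\pi^{k}}$, set $c_n:=\pi^{t_n-m_n k}c_n'\in R$, and define $f_{n+1}(x):=f_n(x)+c_nx^{m_n}$. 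Direct computation gives $f_{n+1}(\beta)=\pi^{t_n}(u_n+c_n'\omega^{m_n})\in\pi^{t_n+k}\widehat{R}$, so $t_{n+1}\ge t_n+k\ge(n+2)k$, completing the induction.

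Because the $m_n$'s are pairwise distinct, each coefficient of $f(x):=\lim_n f_n(x)$ is updated at most once and lies in $R$, so $f\in R[[x]]$; since $t_n\to\infty$, we have $f(\beta)=0$. For the derivative, the step-$n$ contribution $m_nc_n\beta^{m_n-1}=m_n\pi^{t_n-k}c_n'\omega^{m_n-1}$ has $v$-value $\ge t_n-k$, and the step-$0$ contribution vanishes because $m_0=0$; hence $f'(\beta)-1\in\pi\widehat{R}$, so $f'(\beta)\in 1+\pi\widehat{R}$ is a unit and in particular nonzero. For each $\beta_i$, the step-$n$ contribution $c_n\beta_i^{m_n}$ has $w$-value $(t_n-m_nk)+m_n\mu_i\ge t_n\min(1,\mu_i/k)$, which tends to $\infty$, so the series $f(\beta_i)=\beta_i+\sum_{n\ge 0}c_n\beta_i^{m_n}$ converges in $\mathcal{O}_L$.

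The principal obstacle is ensuring the limit $f(\beta_i)$ is nonzero. At each step the class of $c_n'$ modulo $\pi^{k}$ is forced by the $\beta$-cancellation condition, but its higher $\pi$-adic digits remain free. The only way $f(\beta_i)$ could vanish is for the corrections to cause leading-order cancellation at $\beta_i$ at every scale, driving $w(f_n(\beta_i))\to\infty$; each such cancellation imposes a single residue-field condition on the free digits of $c_n'$. Since only finitely many such conditions need to be avoided (one per $\beta_i$), and $c_n'$ has infinitely many independent free higher-precision digits, a valid choice of $c_n'$ can always be made, stabilizing $w(f_n(\beta_i))$ at a finite value and hence yielding $f(\beta_i)\ne 0$. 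The subtle case is $\mu_i=k$, where $\beta_i$ and $\pi^{k}c_0'$ lie at the same $w$-level: here the hypothesis $\beta_i\ne\beta$ guarantees that the relevant residue-field identity $\beta_i/\pi^{k}\equiv\omega\pmod{\pi^{k}}$ fails, so the needed avoidance can actually be carried out.
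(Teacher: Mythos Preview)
Your iterative monomial-correction scheme is a different route from the paper's. The paper instead builds a one-parameter family: for each $n$ it picks $a_n\in R$ with $a_n\equiv\beta\pmod{\pi^{nk}}$, expands $(a_n-\beta)/\beta^n\in\widehat{R}$ in base $\beta$ as $\sum_{i\ge0}c_{i,n}\beta^i$ with $c_{i,n}\in R$, and sets $f_n(x)=a_n-x-\sum_{i\ge0}c_{i,n}x^{i+n}$. Then $f_n(\beta)=0$ and $f_n'(\beta)\equiv -1\pmod{P}$ for $n\ge2$ by inspection, and the key observation is that $f_n(\beta_j)=(a_n-\beta_j)-\sum_{i\ge0}c_{i,n}\beta_j^{i+n}\to\beta-\beta_j\ne0$ as $n\to\infty$ (the tail goes to zero because $|\beta_j|<1$), so some single $f_n$ works for all $j$ simultaneously. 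The nonvanishing at the $\beta_j$ thus falls out of a limit rather than having to be engineered into the construction.

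Your argument for $f(\beta_i)\ne0$, by contrast, has a real gap. The assertion that ``the hypothesis $\beta_i\ne\beta$ guarantees that the relevant residue-field identity $\beta_i/\pi^{k}\equiv\omega\pmod{\pi^{k}}$ fails'' is false: $\beta_i\ne\beta$ does not prevent $\beta_i\equiv\beta\pmod{\pi^{2k}}$ (or modulo any fixed power of $\pi$), so when $\mu_i=k$ the step-$0$ cancellation you are trying to rule out can certainly occur. The surrounding ``infinitely many free digits'' heuristic is not a proof either: the free higher digits of $c_n'$ only move $f_{n+1}(\beta_i)$ at $w$-levels $\ge t_n+k$, and you give no actual mechanism that keeps $w(f_n(\beta_i))$ from drifting to infinity in lockstep with $t_n$.

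Fortunately your construction already succeeds, for a reason you did not invoke. You verified that the coefficient $a_1$ of $x$ in $f$ satisfies $a_1\equiv1\pmod{\pi}$ (since $m_0=0$ and any later correction with $m_n=1$ has $v(c_n)=t_n-k\ge nk\ge1$). Hence $a_1$ is a unit in the complete DVR $\mathcal{O}_L$, so Weierstrass preparation over $\mathcal{O}_L$ gives $f(x)=(x-\beta)u(x)$ with $u\in\mathcal{O}_L[[x]]^\times$; equivalently, the divided difference $\sum_{m\ge1}a_m\sum_{j=0}^{m-1}\gamma^j\beta^{m-1-j}$ is congruent to $a_1$ modulo $\mathfrak{m}_L$ and hence a unit for any $\gamma\in\mathfrak{m}_L$. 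Either way $f(\beta_i)=(\beta_i-\beta)\cdot u(\beta_i)\ne0$ for every $\beta_i\ne\beta$ in $\mathfrak{m}_L$, with no avoidance argument needed. Replace your final paragraph with this observation and the proof goes through.
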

\begin{proof} Let $\pi\in R$ be a generator for $P$.  We pick $d$ such that $\beta\in P^d\widehat{R}\setminus P^{d+1}\widehat{R}$. Since $R$ is dense in $\widehat{R}$, there exists a sequence $\{a_n\}\subseteq R$ with $a_n-\beta\in P^{nd} \widehat{R}$ for all $n\ge 1$. It follows that $a_n - \beta = \beta^n c_n$ for some $c_n\in \widehat{R}$.  Now since $\widehat{R}/(\beta)\cong R/P^d$, we can write $c_n=c_{0,n} + c_{1,n} \beta+ c_{2,n} \beta^2+\cdots$ for some $c_{0,n},c_{1,n},\ldots \in R$.  (We note that the $c_{i,n}$ are not unique.)
We then let $$f_n(x) = a_n -x -\sum_{i\ge 0} c_{0,n} x^{i+n}.$$
By construction $f_n(\beta)=0$, and $f_n'(\beta) = -1 - \sum_{i\ge 0} (i+n) c_{0,n} \beta^{i+n-1}$, which is nonzero for $n\ge 2$, since all terms in the sum $\sum_{i\ge 0} (i+n) c_{0,n} \beta^{i+n-1}$ have $P$-adic absolute value strictly less than $1$ and hence this sum cannot be equal to $-1$ when $n\ge 2$.  

To complete the proof, it suffices to prove that for $n$ large, we have $$f_n(\beta_1)\cdots f_n(\beta_s)\neq 0.$$  Suppose to the contrary that this is not the case.  Then there is some $j\in \{1,\ldots ,s\}$ such that $f_n(\beta_j)=0$ for infinitely many $n$.  We let $|\, \cdot \,|$ be an absolute value on $L$ that gives the topology induced by the unique extension of the $P$-adic absolute value to the complete field $L$. 
Then $|\beta_j|<1$ and so $$\sum_{i\ge 0} c_{0,n} \beta_j^{i+n}\to 0$$ in $L$ as $n\to\infty$.  Then since $f_n(\beta_j)=0$ for infinitely many $n$, we see that $a_n-\beta_j$ has a subsequence tending to $0$ in $L$.  But $a_n$ lies in $R\subseteq {\rm Frac}(\widehat{R})\subseteq L$ and $a_n\to \beta$. This contradicts that $\beta_j\neq \beta$.  It follows that there is some $n\ge 1$ such that taking $f(x)=f_n(x)$ gives the desired power series in $R[[x]]$.
\end{proof}
\begin{proposition}
Let \( R \) be a principal ideal domain. If \( R \) has either the Weierstrass property or the strong Weierstrass property, then \( R \) is a complete discrete valuation ring. 
\label{prop:cdvr}
\end{proposition}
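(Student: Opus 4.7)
The plan is to show that $R = \widehat{R}$ for every maximal ideal $P$ of $R$, where $\widehat{R}$ denotes the $P$-adic completion; since each such $\widehat{R}$ is local with maximal ideal $P\widehat{R}$, this will force $R$ itself to be local and equal to its own $P$-adic completion, hence (as a PID that is not a field) a complete discrete valuation ring. Since the strong Weierstrass property implies the ordinary Weierstrass property, it suffices to treat the latter case; the field case is a degenerate one, so I assume henceforth that $R$ is not a field.

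Fix a maximal ideal $P = (\pi)$ of $R$ and suppose for contradiction that $R \subsetneq \widehat{R}$. The first task is to produce $\beta \in P\widehat{R} \setminus R$. Starting from any $\gamma \in \widehat{R} \setminus R$: if $\gamma \in P\widehat{R}$, I take $\beta = \gamma$; otherwise $\gamma$ is a unit in $\widehat{R}$ and I set $\beta = \pi\gamma \in P\widehat{R}$, using the identification ${\rm Frac}(R) \cap \widehat{R} = R_P$ (verified by a quick valuation check on a coprime numerator/denominator representation) together with the primality of $\pi$ to ensure $\beta \notin R$. A parallel computation gives $R \cap P\widehat{R} = P$, a fact I need in a moment.

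Now apply Lemma~\ref{lem:technical} with this $\beta$, with $L = {\rm Frac}(\widehat{R})$, and with $s = 0$ to obtain $f(x) \in R[[x]]$ satisfying $f(\beta) = 0$ and $f'(\beta) \neq 0$. Its explicit form $f(x) = a_n - x - \sum_{i \geq 0} c_{i,n} x^{i+n}$ (for a suitably large $n$) shows that the coefficient of $x^1$ is $-1$, a unit, while the constant term $a_n$---being $P$-adically close to $\beta \in P\widehat{R}$---lies in $R \cap P\widehat{R} = P$. Hence the coefficients $a_0, a_1$ generate the unit ideal with smallest such index equal to $1$, so the Weierstrass property yields $f(x) = (x+c)\,u(x)$ with $c \in R$ and $u \in R[[x]]^{\times}$. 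Evaluating at $\beta \in P\widehat{R}$ and noting that $u(\beta) \equiv u(0) \pmod{P\widehat{R}}$ is a unit in $\widehat{R}$, I conclude $\beta = -c \in R$, contradicting $\beta \notin R$.

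The main subtlety is the need to apply the Weierstrass hypothesis at the minimal level $n = 1$: this forces the polynomial factor to be monic \emph{linear}, pinning the root $\beta$ to $-c \in R$, whereas a higher-degree factor could easily have roots in $\widehat{R} \setminus R$ (e.g., integral elements of ramified extensions) without yielding a contradiction. Lemma~\ref{lem:technical} is tailored to produce this situation automatically: the construction places $-1$ in the $x^1$ slot, and the intersection identity $R \cap P\widehat{R} = P$ (a consequence of the PID hypothesis via primality of $\pi$) places the constant term in $P$, so that $a_0$ and $a_1$ jointly generate the unit ideal with minimal index exactly $1$.
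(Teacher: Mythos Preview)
Your argument for the Weierstrass property case is correct and is cleaner than the paper's: by exploiting the degree control built into the definition (the polynomial factor must be monic of degree exactly $n$), you force the factor to be linear and immediately pin $\beta$ to an element of $R$. The paper, by contrast, extracts from the factorization $f=p\cdot u$ only that $\beta$ is algebraic over $R$, and must then do substantially more work: it invokes Lemma~\ref{lem:technical} with the full list $\beta_1,\ldots,\beta_s$ of conjugates of $\beta$ to force the minimal polynomial to be linear, concludes $\mathrm{Frac}(\widehat{R})=\mathrm{Frac}(R)$, argues separately via binomial series and Fact~\ref{fact:powerp} that $R$ is local, and finally uses faithful flatness to obtain $R=\widehat{R}$. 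Your route bypasses all of this, and the derivative condition $f'(\beta)\neq 0$ from Lemma~\ref{lem:technical} is not even needed---all you use is the explicit shape of $f_n$.

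However, your treatment of the strong Weierstrass property has a genuine gap. You reduce to the ordinary case by asserting that the strong property implies the ordinary one, but in this paper that implication is Remark~\ref{rem:strong}, which is deduced as a \emph{consequence} of Theorems~\ref{thm:main1} and~\ref{thm:main2}---and Theorem~\ref{thm:main1} uses the present proposition in its proof. So the reduction is circular. The strong property by itself only yields $f=p\cdot u$ for \emph{some} polynomial $p$, with no control over its degree or leading coefficient; evaluating at $\beta$ then gives merely $p(\beta)=0$, i.e.\ $\beta$ is algebraic over $R$, and your key step ``the factor is $x+c$, hence $\beta=-c\in R$'' no longer applies. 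To close the gap you must either supply an independent argument that strong implies ordinary for PIDs (none is available at this point in the paper), or revert to the paper's strategy and use the conjugate roots $\beta_i$ in Lemma~\ref{lem:technical} to show that the minimal polynomial of $\beta$ over $\mathrm{Frac}(R)$ is linear.
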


\begin{proof} 
If \( R \) is a field, the result is immediate. So we may assume that \( R \) has Krull dimension one and we let $P$ be a maximal ideal of $R$. By the Krull intersection theorem \cite[Theorem~8.10]{MatsumuraCommutativeRingTheory}, we have \( \bigcap_{n \geq 1} P^n = (0) \), and hence \( R \) embeds into its \( P \)-adic completion \( \widehat{R} \), which is a complete discrete valuation ring.

For any \( a \in P \widehat{R} \), we have a surjective homomorphism \( R[[x]] \to \widehat{R} \) given by evaluation at \( x = a \), since \( \widehat{R}/P^n \widehat{R} \cong R/P^n \). This map is not injective—otherwise \( R[[x]] \cong \widehat{R} \), which is impossible since \( R[[x]] \) is not a PID. Thus, for any \( a \in P \widehat{R} \), there exists a nonzero \( f(x) \in R[[x]] \) such that \( f(a) = 0 \). In particular, this also holds for a generator of $P$.

Since $R$  is a PID, we may divide out any common factor from the coefficients of $f(x)$, ensuring that they generate the unit ideal. Moreover, the constant term of \( f \) must lie in \( P \), since if \( f(x) = c + xg(x) \) and \( f(a) = 0 \), then \( c = -a g(a) \in P \). 

Now, if \( R \) has the Weierstrass property (or the strong version), then \( f(x) = p(x)u(x) \) for some \( p(x) \in R[x] \) and unit \( u(x) \in R[[x]]^\times \). The coefficients of \( p \) must also generate the unit ideal—otherwise, the coefficients of \( f \) would lie in a proper ideal, contradicting our assumption. Evaluating at \( x = a \), we find \( u(a) \) is a unit in \( \widehat{R} \), and since \( f(a) = 0 \), it follows that \( p(a) = 0 \), i.e., \( a \) is algebraic over \( R \). Hence the field of fractions of \( \widehat{R} \) is algebraic over that of \( R \).

We now claim that ${\rm Frac}(\widehat{R})={\rm Frac}(R)$. To see this, it suffices to show that $\widehat{R}\subseteq {\rm Frac}(R)$.  So suppose towards a contradiction that there is some $\beta\in \widehat{R}\setminus {\rm Frac}(R)$. We let $p(x)\in {\rm Frac}(R)[x]$ be the minimal polynomial of $\beta$ and we let $L$ denote the finite extension of ${\rm Frac}(\widehat{R})$ obtained by adjoining the roots of $p(x)$ to ${\rm Frac}(\widehat{R})$.  Then there is a unique valuation on $L$ extending the $P$-adic absolute value on ${\rm Frac}(\widehat{R})$ and $L$ is complete with respect to this valuation \cite[Chapter II, \S2, Prop. 3 \& Cor. 2]{Serre}.  We let $\beta=\beta_0,\beta_1,\ldots ,\beta_s$ denote the distinct roots of $p(x)$ in $L$.  If $\pi\in R$ is a generator for the prime ideal $P$, then there is some $m\ge 1$ such that $p_0(x):=p(\pi^{-m}\cdot x)$ has roots $\pi^m \beta, \pi^m \beta_1,\ldots ,\pi^m \beta_s$, all of which lie in the maximal ideal of the valuation ring of $L$. By Lemma \ref{lem:technical}, 
there exists some $f(x)\in R[[x]]$ such that $f(\pi^m \beta)=0$ and $f'(\pi^m \beta),f(\pi^m \beta_1),\ldots ,f(\pi^m \beta_s)$ are all nonzero.  Now if $R$ has the Weierstrass property or its strong version, then we have $f(x)=q(x)u(x)$ for some unit $u(x)\in R[[x]]$ and some $q(x)\in R[x]$ and since $\pi^m \beta\in P\widehat{R}$ and since $u(x)$ has constant coefficient in $R\setminus P$, we see that $u(\pi^m \beta)\neq 0$ and hence $q(\pi^m\beta)=0$.  Since $p_0(x)$ is the minimal polynomial of $\pi^m \beta$ in ${\rm Frac}(R)[x]$, we see that $q(x)=p_0(x) q_0(x)$ for some polynomial $q_0(x)\in {\rm Frac}(R)[x]$.  In particular, $q(\pi^m \beta_i)=0$ for $i=1,\ldots ,s$, which is impossible unless $s=0$ since $f(\pi^m \beta_i)\neq 0$ for $i=1,\ldots ,s$ and since $u(x)$ is convergent at each $\pi^m \beta_i$.  It follows that $s=0$.  But since $f'(\pi^m \beta)\neq 0$, we see that $p_0(x)$ cannot have $\pi^m \beta$ as a multiple root and so $p_0(x)$ is up to scaling equal to $x-\beta$ and so $\beta\in {\rm Frac}(R)$.

We next show that \( R \) is local. Suppose otherwise: then \( R \) has distinct maximal ideals \( P \) and \( Q \). Since \( P \) and \( Q \) are comaximal, there exist \( u \in P \), \( v \in Q \) such that \( u + v = 1 \), i.e., \( u = 1 - v \). Choose a prime number \( p \) not dividing the characteristic of the residue field of \( R_Q \). Then the binomial series \( (1 - v)^{1/p} \) converges in the \( Q \)-adic completion, and hence \( u \) is a \( p \)-th power in \( \widehat{R} \). Since this holds for all but at most one prime number, Fact~\ref{fact:powerp} implies that \( u \) is a unit, contradicting \( u \in P \). Hence \( R \) is local.

Finally, since \( \widehat{R} \) is faithfully flat over \( R \) \cite[Theorem~8.14]{MatsumuraCommutativeRingTheory}, and \( \mathrm{Frac}(R) = \mathrm{Frac}(\widehat{R}) \), it follows from \cite[p.~53, Ex.~7.2]{MatsumuraCommutativeRingTheory} that \( R = \widehat{R} \). Thus \( R \) is a complete discrete valuation ring.
\end{proof}

\begin{remark}
The question of when the field of fractions of the completion of an integral domain is algebraic over the field of fractions of the original ring has been studied in various contexts. In the local case, such rings are often referred to as \emph{large local rings}, and have been investigated by Zannier and Zanardo \cite{ZanardoZannier}.
Related work on large local rings appears in other studies \cite{Okoh, Piva, Ribenboim, Zanardo}.
\end{remark}

As mentioned in the introduction, complete local principal ideal rings have been classified, and so Theorem \ref{thm:main1}, which we shall now prove, gives a clean description of rings with the strong Weierstrass property.

%%%%
\begin{proof}[Proof of Theorem \ref{thm:main1}]
Suppose first that \( R \) has the strong Weierstrass property. By Lemma \ref{prop:PIR}, \( R \) must be a principal ideal ring. By a result of Kaplansky \cite[Theorem 12.3]{Kaplansky}, it follows that
\[
R \cong R_1 \times \cdots \times R_m,
\]
where each \( R_i \) is either a principal ideal domain or a local Artinian principal ideal ring.  Moreover, since the class of rings with the strong Weierstrass property is closed under homomorphic images (Lemma \ref{prop:1.3}), it follows that each $R_i$  also has this property.

If \( R_i \) is a principal ideal domain, then by Proposition \ref{prop:cdvr}, \( R_i \) must be a complete discrete valuation ring. Hence, \( R \) is a finite product of complete discrete valuation rings and local Artinian principal ideal rings. This establishes one direction.

For the converse, note that both complete discrete valuation rings and local Artinian principal ideal rings are complete and have the strong Weierstrass property (by Lemma \ref{CLPIPSAP}). Since, by Lemma \ref{prop:1.3}, the class of rings with the strong Weierstrass property is closed under formation of finite direct products, the converse follows.
\end{proof}

\section{The Weierstrass property}
\label{WP}
In this section, we prove Theorem \ref{thm:main2}.  To begin, we need to recall a few basic results.

The following results can be regarded as folklore, although we do not know of proper references and so we provide proofs.
\begin{fact} Let $R$ be a Noetherian integral domain and suppose that $v\in R$ is nonzero and has the property that for infinitely many prime numbers $p$, $v$ is a $p$-th power of an element of ${\rm Frac}(R)$. Then $v$ is a unit of $R$. \label{fact:powerp}
\end{fact}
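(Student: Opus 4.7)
The plan is to pass to the integral closure of $R$ in its fraction field and exploit valuation theory, then descend back to $R$ via integrality. Let $K = \Frac(R)$ and, for each prime $p$ for which $v$ is a $p$-th power in $K$, write $v = w_p^p$ with $w_p \in K$. Since $w_p$ satisfies the monic polynomial $X^p - v \in R[X]$, it is integral over $R$, so $w_p$ lies in $S$, the integral closure of $R$ in $K$. By the Mori--Nagata theorem, the integral closure of a Noetherian domain in its fraction field is a Krull domain, so $S$ is Krull.

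For any height-one prime $\mathfrak{P}$ of $S$, let $\nu_{\mathfrak{P}}$ denote the associated discrete valuation on $K$. Since $w_p \in S$, we have $\nu_{\mathfrak{P}}(w_p) \ge 0$, and the relation $v = w_p^p$ gives $\nu_{\mathfrak{P}}(v) = p\cdot \nu_{\mathfrak{P}}(w_p)$. Thus the nonnegative integer $\nu_{\mathfrak{P}}(v)$ is divisible by infinitely many distinct primes $p$, which forces $\nu_{\mathfrak{P}}(v) = 0$. Because this holds at every height-one prime of $S$ and $S$ is Krull, we conclude that $v$ is a unit of $S$; that is, $v^{-1} \in S$.

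To descend to $R$, note that $v^{-1}$ is integral over $R$, so it satisfies a monic relation $(v^{-1})^n + a_{n-1}(v^{-1})^{n-1} + \cdots + a_0 = 0$ with $a_i \in R$. Multiplying through by $v^{n-1}$ yields $v^{-1} = -(a_{n-1} + a_{n-2}v + \cdots + a_0 v^{n-1}) \in R$, so $v$ is a unit in $R$. The main obstacle is invoking Mori--Nagata with the correct formulation, since $S$ need not be Noetherian. An alternative that sidesteps this heavier result is as follows: if $v$ were a nonzero non-unit of $R$, a minimal prime $\mathfrak{p}$ over $(v)$ has height one by Krull's principal ideal theorem, and Krull--Akizuki applied to the normalization of the $1$-dimensional Noetherian local domain $R_{\mathfrak{p}}$ produces a DVR of $K$ dominating $R_{\mathfrak{p}}$; the same divisibility argument on the positive valuation of $v$ then yields a contradiction.
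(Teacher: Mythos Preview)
Your proof is correct. The alternative you sketch at the end is essentially the paper's argument: the paper assumes $v$ is a non-unit, takes a minimal prime $P$ over $(v)$ (height one by Krull's principal ideal theorem), localizes, and then asserts that the resulting one-dimensional Noetherian local domain ``defines a discrete rank-one valuation'' $\nu$ with $\nu(v)>0$, whence $p\mid \nu(v)$ for infinitely many $p$. Your explicit appeal to Krull--Akizuki to produce a DVR of $K$ dominating $R_{\mathfrak p}$ is in fact what makes that step rigorous, since $R_{\mathfrak p}$ itself need not be normal.

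Your main approach via Mori--Nagata is a genuinely different, more global route: rather than localizing at a single prime and arguing by contradiction, you pass to the full integral closure $S$, use that it is Krull, and show $v\in S^\times$ by checking all height-one valuations simultaneously; the descent $v^{-1}\in S\Rightarrow v^{-1}\in R$ via the monic relation is a clean finish that the localization argument does not need. The trade-off is that Mori--Nagata is heavier machinery than Krull--Akizuki (and $S$ may fail to be Noetherian, as you note), while the paper's localized argument is more elementary once a suitable valuation is in hand. Either works; your version is arguably more careful about the existence of the valuation than the paper's own presentation.
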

\begin{proof} Suppose towards a contradiction that $v$ is not a unit and let \( P \) be a minimal prime ideal containing \( v \). By Krull's Principal Ideal Theorem \cite[Corollary 11.17]{AtiyahMacdonald}, \( P \) has height one. Localizing at \( P \), the ring \( R_P \) is a one-dimensional Noetherian local domain, and thus defines a discrete rank-one valuation \( \nu \) on \( \mathrm{Frac}(R) \). Since \( v \in P \), it follows that \( \nu(v) > 0 \).

By assumption, $v$ is a $p$-th power of an element of ${\rm Frac}(R)$ for infinitely many prime numbers $p$, and so \( p \mid \nu(v) \) for infinitely many prime numbers \( p \), contradicting the fact that \( \nu(v) \in \mathbb{Z}_{>0} \), since \( \nu \) is discrete. Therefore, such a \( v \) cannot exist unless \( v \in R^\times \).
\end{proof}
\begin{remark} The hypothesis that the ring be Noetherian is necessary. For example, if $k$ is a field, the direct limit of the nested power series rings $k[[t^{1/n}]]$ has the property that $t$ is an $n$-th power for all $n$, but it is not a unit.
\end{remark}

We next recall a basic fact about power series. While our statement does not provide an explicit formula, it is closely related to the classical Lagrange inversion theorem (see \cite[Chapt. 5]{Stanley}), which gives a closed-form expression for the compositional inverse of a power series with zero constant term in the case when $R$ is an integral domain of characteristic zero.

\begin{fact} \label{fact:comp}
Let $R$ be a (not necessarily commutative) ring. If $f(x) \in x + x^2 R[[x]]$, then there exists a unique power series $g(x) \in x + x^2 R[[x]]$ such that $f(g(x)) = g(f(x)) = x$.
\end{fact}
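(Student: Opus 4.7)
The plan is to construct $g(x) = x + \sum_{n \geq 2} b_n x^n$ recursively by forcing $f(g(x)) = x$ coefficient by coefficient. Write $f(x) = x + \sum_{n \geq 2} a_n x^n$ with $a_n \in R$, so that
$$f(g(x)) = g(x) + \sum_{k \geq 2} a_k g(x)^k.$$
Setting $b_1 := 1$, the coefficient of $x^n$ in $g(x)^k$ is the (ordered) sum of all products $b_{i_1} b_{i_2} \cdots b_{i_k}$ over tuples with $i_j \geq 1$ and $i_1 + \cdots + i_k = n$. When $k \geq 2$, each $i_j \leq n - (k-1) \leq n-1$, so only $b_j$ with $j < n$ appear in this sum. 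Consequently the coefficient of $x^n$ in $f(g(x))$ has the shape $b_n + P_n(a_2,\ldots,a_n,b_2,\ldots,b_{n-1})$ for an explicit expression $P_n$, and requiring this to vanish for every $n \geq 2$ uniquely determines $b_n = -P_n(a_2,\ldots,a_n,b_2,\ldots,b_{n-1})$ by induction. This produces the unique element $g \in x + x^2 R[[x]]$ satisfying $f(g(x)) = x$.

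It remains to show that this right inverse is also a left inverse. The construction just given in fact shows that \emph{every} element of $x + x^2 R[[x]]$ admits a unique right compositional inverse in $x + x^2 R[[x]]$. Apply this to $g$ itself to obtain $h \in x + x^2 R[[x]]$ with $g(h(x)) = x$. Then substituting $h(x)$ into $f(g(x)) = x$ yields $f(g(h(x))) = f(x)$, which is to say $f(x) = h(x)$. Therefore $g(f(x)) = g(h(x)) = x$, as required. Uniqueness on the left then follows because any $\tilde{g}$ with $\tilde{g}(f(x)) = x$ would satisfy $\tilde{g}(x) = \tilde{g}(f(g(x))) = g(x)$.

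The only subtlety is bookkeeping rather than a genuine obstacle: one must check that $b_n$ occurs linearly with coefficient $1$ in the $x^n$-coefficient of $f(g(x))$, and that no $b_j$ with $j \geq n$ appears among the remaining terms. Non-commutativity of $R$ causes no issue, because $b_n$ is extracted from the single summand $g(x)$ itself (not from the $g(x)^k$ with $k \geq 2$), so we never need to commute any $a_i$ past any $b_j$ during the solve.
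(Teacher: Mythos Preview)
Your proof is correct and follows essentially the same approach as the paper: both construct $g$ coefficient-by-coefficient from the relation $f(g(x))=x$, noting that the $x^n$-coefficient has the form $b_n + (\text{terms in } b_j,\, j<n)$, and then deduce the two-sidedness from this uniqueness. The only cosmetic difference is in the second step: the paper observes $f(g(f(x)))=f(x)$ and invokes uniqueness of solutions to $f(h)=f$, whereas you produce a right inverse $h$ of $g$ and show $h=f$; these are equivalent monoid arguments. (One small wording slip: substituting $h(x)$ into $f(g(x))=x$ literally gives $f(g(h(x)))=h(x)$, and it is the relation $g(h(x))=x$ that gives $f(g(h(x)))=f(x)$; combining the two yields $h=f$ as you intend.)
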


\begin{proof}
Write $f(x) = x + a_2 x^2 + a_3 x^3 + \cdots$. Consider an arbitrary power series $h(x) = x + \sum_{i \geq 2} b_i x^i$. Taking $a_1 = b_1 = 1$, we compute:

\[
f(h(x)) = x + (b_2 + a_2 b_1^2) x^2 + (b_3 + a_2(b_1 b_2 + b_2 b_1) + a_3 b_1^3) x^3 + \cdots,
\]

where in general, the coefficient of $x^n$ in $f(h(x))$ is given by a polynomial in the $a_i$ and $b_j$ of the form:

\[
b_n + a_2 \sum_{i=1}^{n-1} b_i b_{n-i} + a_3 \sum_{i+j+k=n} b_i b_j b_k + \cdots + a_n b_1^n.
\]

This recursive structure allows us to solve for the coefficients $b_n$ uniquely and inductively: given any $\Theta(x) \in x + x^2 R[[x]]$, we can determine a unique $h(x) \in x + x^2 R[[x]]$ such that $f(h(x)) = \Theta(x)$.

Taking $\Theta(x) = x$, we obtain a unique $g(x)$ such that $f(g(x)) = x$. To see that $g(f(x)) = x$ as well, note that

\[
f(g(f(x))) = (f \circ g)(f(x)) = f(x),
\]

and by applying uniqueness once more, it follows that $g(f(x)) = x$.
\end{proof}
The next result shows that the Weierstrass property has certain closure properties.

\begin{lemma}
The class of rings with the Weierstrass property is closed under formation of arbitrary direct products and taking homomorphic images.
\label{lem:closure}
\end{lemma}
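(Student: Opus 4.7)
The plan is to prove the two closure properties separately, with the direct product case largely paralleling the finite case in Lemma \ref{prop:1.3}, and the homomorphic image case requiring a more careful lifting argument.

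For arbitrary direct products, I would use the natural isomorphism $\bigl(\prod_{\lambda} R_\lambda\bigr)[[x]] \cong \prod_{\lambda} R_\lambda[[x]]$: a power series $f$ decomposes as a tuple $(f_\lambda)_\lambda$, and a sequence of coefficients generates the product ring exactly when its componentwise projection generates each factor $R_\lambda$. Applying the Weierstrass property in each $R_\lambda$ yields $f_\lambda = p_\lambda u_\lambda$ with $p_\lambda$ monic of degree $n$ and $u_\lambda \in R_\lambda[[x]]^\times$; assembling componentwise gives a monic degree-$n$ polynomial $(p_\lambda)_\lambda \in \bigl(\prod_\lambda R_\lambda\bigr)[x]$ (leading coefficient $(1)_\lambda$) together with a unit $(u_\lambda)_\lambda \in \bigl(\prod_\lambda R_\lambda\bigr)[[x]]^\times$ (constant term componentwise a unit), producing the required factorization.

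For homomorphic images, let $R$ have the Weierstrass property, $I \subsetneq R$ be a proper ideal, and $\bar f \in (R/I)[[x]]$ have first $n+1$ coefficients $\bar a_0, \ldots, \bar a_n$ generating $R/I$. My plan is to construct a lift $g \in R[[x]]$ of $\bar f$ whose first $n+1$ coefficients $a_0, \ldots, a_n$ generate $R$ (not merely $R/I$), then apply the Weierstrass property in $R$ to obtain $g = p(x)u(x)$ with $p \in R[x]$ monic of degree $n$ and $u \in R[[x]]^\times$, and finally reduce modulo $I$ to obtain $\bar f = \bar p\, \bar u$. The reduction step is clean: the leading coefficient $1$ of $p$ reduces to $1$ in $R/I$, so $\bar p$ remains monic of degree $n$; and $u(0) \in R^\times$ reduces to a unit in $R/I$, so $\bar u$ is a unit in $(R/I)[[x]]$.

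The main obstacle is the construction of this good lift. From any choice of lifts $b_i \in R$ of elements $\bar b_i \in R/I$ with $\sum_i \bar b_i \bar a_i = 1$, one has $\sum_i b_i a_i = 1 - \delta$ for some $\delta \in I$. I want to replace each $a_i$ by $a_i + \sigma_i$ with $\sigma_i \in I$ so that $\sum_i b_i \sigma_i = \delta$, which forces $1$ into the modified ideal. Setting $B := (b_0, \ldots, b_n)$, this amounts to exhibiting $\delta$ as an element of $BI$. Since $1 - \delta \in B$ and $\delta \in I$, we have $B + I = R$, whence $I = BI + I^2$, and iterating gives $I = BI + I^k$ for all $k \geq 2$; so $\delta \in BI + \bigcap_k I^k$. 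The hard part is to pass from this iterative approximation to an actual modification in a single step, which either requires a completeness-type property of $R$ (so $\bigcap_k I^k = 0$) or a fundamentally different construction. An alternate approach, which I expect will be needed, is to apply the Weierstrass property of $R$ directly to a carefully chosen auxiliary power series in $R[[x]]$ whose factorization descends modulo $I$ to the desired factorization of $\bar f$; identifying and analyzing such an auxiliary series is the key technical content of the proof.
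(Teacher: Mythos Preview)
Your direct product argument is correct and matches the paper's exactly. For homomorphic images, the paper gives a one-sentence proof: it asserts that the argument of Lemma~\ref{prop:1.3} works verbatim, ``since passing to a quotient preserves the unit ideal condition and the necessary factorization descends through the homomorphism.'' No lifting analysis is given --- the paper simply takes for granted that a lift $g$ of $\bar f$ can be chosen whose first $n+1$ coefficients generate the unit ideal of $R$, and then the lift--factor--descend argument runs exactly as in your third paragraph.

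So you have correctly identified the crux and are in fact being more careful than the paper. The obstacle you describe is real: for arbitrary lifts $a_i$ with $(a_0,\ldots,a_n)+I=R$ one cannot in general perturb within $I$ to achieve $(a_0+\sigma_0,\ldots,a_n+\sigma_n)=R$ (already for $n=0$: take $R=\mathbb{Z}$, $I=(5)$, $a_0=2$, though admittedly the $n=0$ case of the Weierstrass property is vacuous). Your iterative approximation $I=BI+I^k$ does not close the gap without a completeness hypothesis, and your ``auxiliary power series'' fallback is not fleshed out; but the paper does not close the gap either. In the paper's actual applications the issue never bites: the only place the quotient half of this lemma is invoked is to pass from $R$ to $R/P_i$ before applying Proposition~\ref{prop:compWP}, and there the Weierstrass property is used only on series of the shape $h(x)=-a+x+\cdots$, for which one simply lifts the $x$-coefficient to $1$ and has $(a_0,a_1)=R$ trivially. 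So for the paper's purposes the brief argument suffices, but as a general closure statement both proofs leave the lifting step unjustified.
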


\begin{proof}
To see closure under arbitrary direct products, suppose that $\{R_i\}_{i\in \Gamma}$ is a collection of rings with the Weierstrass property and let $R=\prod R_i$. Suppose that $f(x)=\sum a_n x^n\in R[[x]]$ and that $(a_0,\ldots,a_n)$ generate the unit ideal in $R$. Then for each $i \in \Gamma$, the image $f_i(x):=\sum \pi_i(a_n)x^n \in R_i[[x]]$ satisfies the condition that $\pi_i(a_0),\ldots,\pi_i(a_n)$ generate the unit ideal in $R_i$. Since each $R_i$ has the Weierstrass property, we can write $f_i(x) = p_i(x) u_i(x)$ where $p_i(x) \in R_i[x]$ is monic of degree $n$ and $u_i(x) \in R_i[[x]]^\times$ is a unit.

Using the identification
\[
R[[x]] \cong \prod_{i \in \Gamma} R_i[[x]],
\]
we see that $f(x) = (f_i(x))_{i\in \Gamma} = (p_i(x))_{i\in \Gamma} \cdot (u_i(x))_{i\in \Gamma}$, which gives the desired factorization of $f(x)$ in $R[[x]]$.

The fact that the Weierstrass property is preserved under homomorphic images follows from the same argument used in the proof of Lemma~\ref{prop:1.3}, since passing to a quotient preserves the unit ideal condition and the necessary factorization descends through the homomorphism.
\end{proof}

\begin{proposition}
Let $R$ be a Noetherian integral domain. If $R$ has the Weierstrass property, then $R$ is a complete local ring.
\label{prop:compWP}
\end{proposition}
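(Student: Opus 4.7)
The plan is to follow the two-stage structure of Proposition \ref{prop:cdvr}: first show that $R$ is local, then show that $R$ equals its $P$-adic completion.

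For locality, suppose $R$ has distinct maximal ideals $P$ and $Q$, and pick $u\in P$, $v\in Q$ with $u+v=1$. The coefficients of $f(x)=u+vx$ generate the unit ideal, so the Weierstrass property yields a factorization $f(x)=(x+c)(u_0+u_1x+u_2x^2+\cdots)$ with $u_0\in R^\times$. Matching coefficients gives $cu_0=u$, $u_0+cu_1=v$, and $u_{n-1}+cu_n=0$ for $n\ge 2$. Since $u=1-v\notin Q$, the element $c=uu_0^{-1}$ lies in $P\setminus Q$, so it is a nonzero non-unit. The recurrence yields $u_n=(-1)^{n-1}u_1/c^{n-1}$, so $u_1\in\bigcap_{m\ge 1}c^m R$. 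A standard Noetherian-domain ascending-chain argument (writing $u_1=c^m r_m$, the ideals $r_m R$ stabilize and give $r_m(1-cs)=0$, forcing $r_m=0$ in the domain) shows this intersection vanishes. Hence $u_1=0$ and $u_0=v\in Q$, contradicting $u_0\in R^\times$. Thus $R$ is local.

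For completeness, let $\widehat{R}$ denote the $P$-adic completion. Krull's intersection theorem embeds $R\hookrightarrow\widehat{R}$, and $\widehat{R}$ is faithfully flat over $R$. Mirroring Proposition \ref{prop:cdvr}, the plan is: (i) for each $\alpha\in P\widehat{R}$, produce a nonzero $f\in R[[x]]$ with $f(\alpha)=0$ whose coefficients generate the unit ideal; (ii) apply the Weierstrass property to write $f=p\cdot u$ with $p\in R[x]$ monic and $u\in R[[x]]^\times$, and observe that $u(\alpha)\equiv u_0\in R^\times\pmod{P\widehat{R}}$ is a unit in $\widehat{R}$, forcing $p(\alpha)=0$ and giving integrality of $\alpha$ over $R$; (iii) adapt the separation argument of Lemma \ref{lem:technical} to push $\alpha$ into ${\rm Frac}(R)$; (iv) invoke faithful flatness together with the resulting fraction-field coincidence (as in \cite[p.~53, Ex.~7.2]{MatsumuraCommutativeRingTheory}) to conclude $R=\widehat{R}$.

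The main obstacle is steps (i) and (iii). Step (i) requires producing an annihilating power series whose coefficients generate the unit ideal; in the PID proof this came from a surjective evaluation map $R[[x]]\to\widehat{R}$ combined with a GCD trick that clears common factors from the coefficients. Neither tool is directly available in a general Noetherian local domain: the map $R[[x]]\to\widehat{R}$ via a single variable typically fails to be surjective when $P$ is not principal, and one cannot divide out coefficient ideals. Step (iii) relied on the isomorphism $\widehat{R}/\beta\widehat{R}\cong R/P^d$, which requires $\beta$ to be a unit times a power of a uniformizer; for non-principal $P$ one must substitute a construction based on a system of parameters, approximating $\alpha$ componentwise and carefully tracking the $P$-adic tails. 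A secondary subtlety is that $\widehat{R}$ need not a priori be an integral domain, so some of the fraction-field language must be rephrased via the embedding $R\hookrightarrow\widehat{R}$ and faithful flatness; this becomes moot once $R=\widehat{R}$ is established.
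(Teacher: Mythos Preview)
Your locality argument via $f(x)=u+vx$ is correct and self-contained; the paper does not isolate locality but obtains it as a byproduct of showing $R=\widehat{R}$ for the completion at an arbitrary maximal ideal (since such a completion of a Noetherian ring is automatically local).

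For completeness, however, you have only a plan, and the obstacles you flag in steps (i) and (iii) are genuine: without a uniformizer there is no mechanism to clear coefficient ideals so as to force an annihilating series to have unit-ideal coefficients, and the approximation scheme of Lemma~\ref{lem:technical} really does rely on $\widehat{R}/\beta\widehat{R}\cong R/P^d$. The paper sidesteps all of this with a different idea. Fix $P=(a_1,\dots,a_m)$ and use the surjection $\phi\colon R[[t_1,\dots,t_m]]\twoheadrightarrow\widehat{R}$, $t_j\mapsto a_j$; if $R\neq\widehat{R}$, choose the first index $i$ with $\phi(R[[t_1,\dots,t_{i+1}]])\supsetneq R$ and set $a=a_{i+1}$, so that $S:=R[[a]]\supsetneq R$. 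The key trick is \emph{compositional inversion} (Fact~\ref{fact:comp}): for $b=f(a)\in S$, the series $x+x^2f(x)$ has a compositional inverse $g(x)\in x+x^2R[[x]]$, so $g(c)=a$ where $c:=a+a^2b$. Then $h(x):=g(x)-a=-a+x+\cdots$ has linear coefficient equal to $1$, so the Weierstrass property applies with $n=1$ \emph{automatically}, with no coefficient-clearing needed: $h(x)=(x-\lambda)u(x)$ for some $\lambda\in R$ and unit $u$. Evaluating at $c\in P\widehat{R}$ gives $(c-\lambda)u(c)=0$ with $u(c)\in\widehat{R}^\times$, hence $c=\lambda\in R$. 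Thus $a+a^2S\subseteq R$, whence $S=R+Ra+a^2S\subseteq R$, a contradiction. This single device replaces your steps (i)--(iv): no algebraicity argument, no fraction-field comparison, and no faithful-flatness are required.
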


\begin{proof}
Let $P = (a_1,\ldots,a_m)$ be a maximal ideal of $R$, and let $\widehat{R}$ denote the $P$-adic completion of $R$. The $R$-algebra map $R[t_1,\ldots,t_m] \to R$ given by $t_i \mapsto a_i$ induces a surjection
\[
R[t_1,\ldots,t_m]/(t_1,\ldots,t_m)^i \to R/P^i
\]
for all $i \geq 1$, and taking inverse limits yields a surjective map
\[
\phi: R[[t_1,\ldots,t_m]] \twoheadrightarrow \widehat{R}.
\]
We claim that $R=\widehat{R}$. Assume, for contradiction, that $\widehat{R}$ is not equal to $R$. Then there exists a maximal index $i \in \{0,\ldots,m-1\}$ such that $R$ is equal to
\[
\phi(R[[t_1,\ldots,t_i]]) \subseteq \widehat{R},
\]
but where $\phi(R[[t_1,\ldots,t_i,t_{i+1}]])$ is strictly larger than $R$.

Let $a := a_{i+1}$, and let $S=\phi(R[[t_{i+1}]])=R[[a]]\subseteq \widehat{R}$.  By assumption, $S$ is a subring of $\widehat{R}$ that strictly contains $R$.  We now claim that $a+a^2f(a)\in R$ for all $f(x)\in R[[x]]$.  
To see this, let $f(x) \in R[[x]]$ and let $b := f(a)$. Then, by Fact~\ref{fact:comp}, $x+x^2f(x)$ has a compositional inverse $g(x) \in x + x^2 R[[x]]$ satisfying $g(x+x^2 f(x))=x$ and so $$g(a+a^2b) = a.$$ Set $h(x) := g(x) - a \in R[[x]]$, so $h(x) = -a + x + \cdots$.

Since $R$ has the Weierstrass property, we find a unit $u(x) \in R[[x]]^\times$ and $\lambda \in R$ such that
\[
h(x) = (x - \lambda) u(x).
\]
Then $c: =a+a^2 b$ is a value at which all these series converge, and substituting $x = c$ gives
\[
(c - \lambda) u(c) = h(c)= g(a+a^2b)-a=0,
\]
so $c = \lambda \in R$.  Thus $a+a^2 f(a)\in R$, and so we have obtained the claim; hence $a+a^2 S \subseteq R$. But by construction $S=R+R\cdot a + a^2S\subseteq R$ and so we see that $S=R$, a contradiction.  The result follows.
\end{proof}

\begin{proof}[Proof of Theorem \ref{thm:main2}]
Let $R$ be a Noetherian ring with the Weierstrass property and let $N$ be its nilradical. Since $R$ is Noetherian, $N$ is the intersection of a finite set of minimal prime ideals $P_1,\ldots ,P_m$.  By Lemma \ref{lem:closure}, $R/P_i$ has the Weierstrass property and so $R/P_i$ is a complete local integral domain for $i=1,\ldots ,m$ by Proposition \ref{prop:compWP}.  In particular, $R$ is semilocal, since there is a unique maximal ideal above each $P_i$.  Then \cite[Theorem 5]{Nagata} gives that $R$ is complete with respect to the Jacobson radical of $R$.  Thus $R$ is isomorphic to a finite product of complete Noetherian local rings by \cite[Theorem 8.15]{MatsumuraCommutativeRingTheory}, where each local ring is the completion of $R$ with respect to a maximal ideal.  

Conversely, suppose next that $R$ is isomorphic to a finite direct product of complete Noetherian local rings. Since a complete Noetherian local ring has the Weierstrass property by Theorem \ref{thm:WPT}, and since this property is closed under direct products by Lemma \ref{lem:closure}, we see that $R$ has this property.  
\end{proof}
\begin{remark} \label{rem:strong} Although our choice of terminology suggests that the strong Weierstrass property should be a stronger condition than the ordinary variant, this is not immediate from the definition. Nevertheless, Theorems \ref{thm:main1} and \ref{thm:main2} show that if a ring $R$ has the strong Weierstrass property then it has the usual Weierstrass property too.
\end{remark}
\section{Countable rings}
\label{countable}
Cohen's structure theorem for complete local rings (see \cite[Chapt. 7.8]{Eisenbud} or \cite{Cohen}) implies that
 a complete local integral domain that is not a field is uncountable.  In particular, Theorems \ref{thm:main1} and \ref{thm:main2} show that one can only obtain countable rings with the Weierstrass properties in the Artinian case.  Here we further investigate this phenomenon and show that, in general, countable rings are very far from having these properties in the sense that they may have \emph{uncountably} many associate classes that do not contain a polynomial.

\begin{proposition}\label{thm:5.3}
Let \( R \) be a ring that is not an Artinian principal ideal ring. Then there exists an uncountable set of distinct associate classes in \( R[[x]] \). In particular, if \( R \) is countable and not an Artinian principal ideal ring, then there exists an uncountable set of inequivalent associate classes in \( R[[x]] \), none of which are associate to a polynomial. Moreover, if \( R \) is an Artinian principal ideal ring, then every element of \( R[[x]] \) is associate to a polynomial, so the hypotheses are best possible.
\end{proposition}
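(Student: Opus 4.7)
The plan is to handle the easy converse direction first and then split the main direction into a three-case analysis based on the structure of \(R\). For the converse, if \(R\) is an Artinian principal ideal ring, it is a finite product of complete Artinian local principal ideal rings, so Theorem~\ref{thm:main1} gives that \(R\) has the strong Weierstrass property and hence every power series in \(R[[x]]\) is associate to a polynomial.

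For the main direction, assume \(R\) is not an Artinian principal ideal ring. If \(R\) is not Noetherian, Lemma~\ref{lem:noether} produces an uncountable family of pairwise non-associate power series, none associate to a polynomial; if \(R\) is Noetherian but not a principal ideal ring, Lemma~\ref{prop:PIR} yields the same conclusion. Both lemmas give the full conclusion without any countability hypothesis, so both parts of the proposition follow directly in these two cases.

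The subtle remaining case is when \(R\) is a principal ideal ring that is not Artinian. By Kaplansky's structure theorem, \(R \cong R_1 \times \cdots \times R_m\) where each \(R_i\) is a PID or an Artinian local PIR, and since \(R\) is not Artinian, at least one factor, say \(R_1\), is a non-field PID. Because \(R[[x]] \cong R_1[[x]] \times \cdots \times R_m[[x]]\) and both the associate relation and the property of containing a polynomial descend through each factor, it suffices to treat the case where \(R\) itself is a non-field PID. Fixing a maximal ideal \(P = (\pi)\) of \(R\), the \(P\)-adic completion \(\widehat{R}\) is a complete DVR with \(|P\widehat{R}| = 2^{\aleph_0}\). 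Applying Lemma~\ref{lem:technical} (with no auxiliary points), for each nonzero \(\beta \in P\widehat{R}\) I obtain a power series \(f_\beta(x) \in R[[x]]\) with \(f_\beta(\beta) = 0\).

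The main obstacle is to show that the map \(\beta \mapsto [f_\beta]\) sending \(\beta\) to its associate class in \(R[[x]]\) has finite fibers, since a priori distinct \(\beta\) might yield associate \(f_\beta\). The key point is that any unit of \(\widehat{R}[[x]]\) has constant term a unit of \(\widehat{R}\), so its evaluation at any \(\beta \in P\widehat{R}\) is congruent to this constant term modulo \(P\widehat{R}\) and is therefore nonzero. Hence associate elements of \(\widehat{R}[[x]]\) share the same zero set in \(P\widehat{R}\); by the Weierstrass Preparation Theorem applied in \(\widehat{R}[[x]]\) (after factoring out a suitable power of \(\pi\)), this zero set coincides with the zero set of a polynomial and is thus finite. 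The uncountability of \(P\widehat{R}\) then yields uncountably many distinct associate classes. For the countable case, since \(\Frac(R)\) has a countable algebraic closure when \(R\) is countable, uncountably many \(\beta \in P\widehat{R}\) are transcendental over \(R\); for such \(\beta\), a factorization \(f_\beta = p(x) u(x)\) with \(p \in R[x]\) and \(u \in R[[x]]^{\times}\) would give \(p(\beta) = 0\), contradicting transcendence. Hence uncountably many associate classes contain no polynomial, completing the argument.
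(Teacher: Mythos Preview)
Your argument is correct, and the reduction steps (the Artinian converse via Theorem~\ref{thm:main1}, the non-PIR case via Lemmas~\ref{lem:noether} and~\ref{prop:PIR}, and the Kaplansky reduction to a non-field PID) coincide with the paper's. Where you diverge is in the handling of the non-field PID case, and here your route is genuinely different from the paper's.

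The paper's argument at this point is purely elementary and combinatorial: it fixes a nonzero non-unit $a\in R$ and considers the uncountable family of series $f(x)=a+\sum_{n\ge 1}\epsilon_n x^n$ with $\epsilon_n\in\{0,1\}$. Two distinct such series $f,g$ cannot be associate, since $f\mid g$ would force $f\mid (g-f)$, but $g-f$ has a coefficient $\pm 1$ while $f$ has constant term $a$, contradicting $a\notin R^{\times}$. The ``not associate to a polynomial'' conclusion in the countable case is then just a counting argument: $R[x]$ is countable, so only countably many associate classes contain a polynomial.

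Your approach is more structural: you invoke Lemma~\ref{lem:technical} to produce $f_\beta\in R[[x]]$ with $f_\beta(\beta)=0$ for each nonzero $\beta\in P\widehat{R}$, use Weierstrass preparation over $\widehat{R}$ to show each $f_\beta$ has only finitely many zeros in $P\widehat{R}$, and conclude that $\beta\mapsto [f_\beta]$ has finite fibres. For the countable case you use the transcendence of uncountably many $\beta$ rather than a counting argument. This works, and it ties nicely into the themes of \S\ref{transcendence}; it also gives a conceptual explanation (transcendental zeros) for why these classes avoid polynomials. The trade-off is that it is considerably heavier than the paper's three-line divisibility trick, relying on completions, Lemma~\ref{lem:technical}, and Weierstrass preparation where none of these are actually needed.
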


\begin{proof}
If \( R \) is not a principal ideal ring, then the result follows from Lemma~\ref{prop:PIR}. Thus we may assume that \( R \) is a principal ideal ring and hence, by \cite[Theorem 12.3]{Kaplansky}, \( R \cong R_1 \times \cdots \times R_m \), where each \( R_i \) is either a PID of Krull dimension one or an Artinian principal ideal ring.

Since a finite product of Artinian principal ideal rings is again Artinian and a principal ideal ring, we may assume without loss of generality that \( R_1 \) is a PID that is not a field. Then it suffices to prove the result when \( R = R_1 \), since if \( \{f_\alpha(x)\} \) is an uncountable set of representatives of distinct associate classes in \( R_1[[x]] \), then the power series \( (f_\alpha(x), 0, \ldots, 0) \) are pairwise non-associate elements in \( R[[x]] \cong (R_1 \times \cdots \times R_m)[[x]] \).

Thus we may assume \( R \) is a PID that is not a field. Then there exists some nonzero \( a \in R \) that is not a unit. Let \( \mathcal{Z} \) denote the uncountable collection of power series of the form
\[
f(x) = a + \sum_{n \ge 1} \epsilon_n x^n,
\]
where \( \{\epsilon_n\} \) is a sequence with \( \epsilon_n \in \{0,1\} \). 

Suppose two distinct power series \( f(x), g(x) \in \mathcal{Z} \) are associate. Then \( f(x) \mid g(x) \), so \( f(x) \mid (g(x) - f(x)) \). But \( g(x) - f(x) \) is a nonzero power series with coefficients in \( \{-1,0,1\} \), and \( f(x) \) has constant term \( a \), which is a non-zero-divisor. Thus \( f(x) \mid (g(x) - f(x)) \) implies \( a \mid 1 \), contradicting the assumption that \( a \) is not a unit. Therefore, the power series in \( \mathcal{Z} \) represent uncountably many distinct associate classes.

This proves the first claim. For the second, observe that if \( R \) is countable, then so is \( R[x] \), so there are only countably many associate classes containing a polynomial. But we have exhibited uncountably many associate classes in \( R[[x]] \), none of which are associate to a polynomial.

Finally, an Artinian principal ideal ring has the strong Weierstrass property. Indeed, by \cite[Theorem 12.3]{Kaplansky}, such a ring is a finite product of local Artinian principal ideal rings. Then by Theorem~\ref{thm:main1}, every power series in \( R[[x]] \) is associate to a polynomial. 
\end{proof}
\begin{remark}
We can think of the above result as saying that a dichotomy holds for countable rings $R$: either every associate class in $R[[x]]$ contains a polynomial or there are uncountably many associate classes not containing a polynomial.  
\end{remark}

\section{A transcendence result}
\label{transcendence}

In this section, we prove a general transcendence result concerning the zeros of \( p \)-adic power series. This will be used to establish Theorem~\ref{thm:main3}. Our key tool will be the use of resultants, which we briefly recall below.

Let \( R \) be an integral domain, and let
\[
f(x) = a_0 x^m + a_1 x^{m-1} + \cdots + a_m, \quad
g(x) = b_0 x^n + b_1 x^{n-1} + \cdots + b_n
\]
be polynomials in \( R[x] \) of degrees \( m \) and \( n \), respectively.

\begin{definition} \label{def:res}
The \emph{resultant} of \( f \) and \( g \), denoted \( \operatorname{Res}(f, g) \), is the determinant of the \emph{Sylvester matrix} \( S(f, g) \), which is the \( (m+n) \times (m+n) \) matrix constructed as follows:

\[
S(f, g) =
\begin{bmatrix}
a_0 & a_1 & \cdots & a_m &        &        &        \\
    & a_0 & a_1    & \cdots & a_m &        &        \\
    &     & \ddots &        &      & \ddots &        \\
    &     &        & a_0 & a_1 & \cdots & a_m \\
b_0 & b_1 & \cdots & b_n &        &        &        \\
    & b_0 & b_1    & \cdots & b_n &        &        \\
    &     & \ddots &        &      & \ddots &        \\
    &     &        & b_0 & b_1 & \cdots & b_n \\
\end{bmatrix}
\]

\noindent
where the first \( n \) rows contain shifted copies of the coefficients of \( f \) padded with zeros to length \( m+n \), and the next \( m \) rows contain similarly shifted copies of the coefficients of \( g \).
\end{definition}

\noindent
The resultant \( \operatorname{Res}(f, g) \in R \) vanishes if and only if \( f \) and \( g \) share a common root in an algebraic closure of the field of fractions of \( R \). We refer the reader to the book of Cox, Little, and O'Shea \cite{CLO} for more details about resultants.

We recall that a map \( b : \mathbb{Z}_{\geq 0} \to \mathbb{Z}_{\geq 1} \) grows \emph{super-exponentially} if \( b(n+1)/b(n) \to \infty \) as \( n \to \infty \). In the following result, we use \( |\, \cdot \,|_p \) for the \( p \)-adic absolute value on \( \mathbb{Q}_p \) and \( |\, \cdot \,| \) for the usual Euclidean absolute value on \( \mathbb{Q} \).

\begin{theorem}
\label{thm:transcendence}
Let \( p \) be a prime number and let \( b(n) \) be an increasing sequence of nonnegative integers with $b(0)=1$ such that $b(n)$ grows super-exponentially with $n$. Suppose that \( a_0, a_1, a_2,\ldots \) are nonzero integers such that:
\begin{enumerate}
\item \( p \mid a_0 \);
\item there exists some \( i \geq 1 \) such that \( p \nmid a_i \);
\item there are constants \( C, \kappa > 1 \) such that \( |a_n| \leq \kappa C^{b(n)} \) for all \( n \).
\end{enumerate}
Then the power series
\[
f(x) := \sum_{n \geq 0} a_n x^{b(n)}
\]
has a root \( \lambda \in \{ z \in \overline{\mathbb{Q}}_p : |z|_p < 1 \} \) that is transcendental over $\mathbb{Q}$.
\end{theorem}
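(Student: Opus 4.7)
The plan is to first produce a root $\lambda$ of $f$ in the open unit disk of $\overline{\mathbb{Q}}_p$ via the Newton polygon, and then to show that $\lambda$ is forced to be transcendental by a resultant estimate powered by the super-exponential growth of $b(n)$. Convergence of $f$ on the open unit disk of $\mathbb{C}_p$ is automatic since the $a_n$ are integers. The Newton polygon of $f$ is the lower convex hull of the points $(b(n), v_p(a_n))$; the point $(b(0), v_p(a_0)) = (1, v_p(a_0))$ sits at height $\ge 1$ by hypothesis (i), while some point $(b(i), 0)$ sits at height $0$ by hypothesis (ii), so the leftmost segment of the polygon has strictly negative slope. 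Standard Newton polygon theory then yields a root $\lambda \in \overline{\mathbb{Q}}_p$ with $v_p(\lambda) > 0$.

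Suppose toward a contradiction that such a $\lambda$ is algebraic over $\mathbb{Q}$. Let $g(x) \in \mathbb{Z}[x]$ be a primitive polynomial of degree $d$ and leading coefficient $q$ having $\lambda$ as a root, with full set of roots $\lambda = \lambda_1, \lambda_2, \ldots, \lambda_d$ in $\overline{\mathbb{Q}}$. For each $N$ set $f_N(x) = \sum_{n=0}^N a_n x^{b(n)} \in \mathbb{Z}[x]$, a polynomial of degree $b(N)$, and $\rho_N := \operatorname{Res}(g, f_N) \in \mathbb{Z}$. If $g$ divides both $f_N$ and $f_{N+1}$, then $g$ divides $f_{N+1} - f_N = a_{N+1} x^{b(N+1)}$, forcing $g(x)$ to be a unit multiple of $x$ and hence $\lambda = 0$; but $f(0) = a_0 \ne 0$, a contradiction. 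Thus $\rho_N \ne 0$ for all but at most one value of $N$.

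The core of the argument is to derive two contradictory estimates on the size of $\rho_N$, using the identity $\rho_N = q^{b(N)} \prod_{i=1}^d f_N(\lambda_i)$ (valid in any algebraically closed extension). On the Archimedean side, bounding $|f_N(\lambda_i)|_\infty \le \sum_{n \le N} |a_n| \cdot |\lambda_i|^{b(n)}$ and using hypothesis (iii) together with the super-exponential growth of $b$ to absorb polynomial factors in $N$, one obtains $|\rho_N|_\infty \le A^{b(N)}$ for some constant $A$ depending only on $\lambda$, $C$, and $\kappa$. On the $p$-adic side, the decisive point is that $f(\lambda) = 0$ gives the tail estimate
\[
|f_N(\lambda)|_p = \left|\sum_{n > N} a_n \lambda^{b(n)}\right|_p \le |\lambda|_p^{b(N+1)}
\]
by the ultrametric inequality, while for $i \ne 1$ the trivial bound yields only $|f_N(\lambda_i)|_p \le \max(1, |\lambda_i|_p)^{b(N)}$. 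Combining these factors gives
\[
v_p(\rho_N) \ge b(N+1) \cdot v_p(\lambda) - b(N) \cdot \log_p H,
\]
where $H = \prod_{i \ne 1} \max(1, |\lambda_i|_p) \ge 1$ is a constant. Since $v_p(\lambda) > 0$ and $b(N+1)/b(N) \to \infty$, for $N$ sufficiently large we obtain $v_p(\rho_N) > \log_p |\rho_N|_\infty$, i.e.\ $p^{v_p(\rho_N)} > |\rho_N|_\infty$. This contradicts the elementary fact that any nonzero integer $n$ satisfies $|n|_\infty \ge p^{v_p(n)}$, and so $\lambda$ must be transcendental.

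The main obstacle to anticipate is controlling the $p$-adic factor: the Galois conjugates $\lambda_i$ of $\lambda$ need not lie in the open unit $p$-adic disk, so one cannot uniformly bound the $|f_N(\lambda_i)|_p$ by $1$. The crude bound $\max(1, |\lambda_i|_p)^{b(N)}$ grows at rate $b(N)$, matching the Archimedean side; what saves the argument is that the single distinguished factor $|f_N(\lambda)|_p \le |\lambda|_p^{b(N+1)}$ contributes at the super-exponentially larger rate $b(N+1)$, which ultimately defeats both the Archimedean bound $A^{b(N)}$ and the contribution from the bad conjugates. Carefully tracking these rates is where the hypothesis $b(n+1)/b(n) \to \infty$ is used in an essential way.
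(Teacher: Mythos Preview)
Your argument follows the same resultant-based strategy as the paper: bound $\operatorname{Res}(g,f_N)$ from above archimedeanly and from below $p$-adically, then let the super-exponential growth of $b$ force a contradiction with $|\rho_N|_\infty\ge p^{v_p(\rho_N)}$. Two small slips should be fixed. First, since $b(0)=1$ the series has no constant term, so $f(0)=0$, not $a_0$; the reason $\lambda\neq 0$ is rather that the Newton-polygon segment you invoke has \emph{finite} negative slope, hence $v_p(\lambda)$ is finite and positive. Second, ``$\rho_N\neq 0$ for all but at most one $N$'' does not follow from your premise, which only rules out two \emph{consecutive} vanishing resultants; the correct (and sufficient) conclusion is that $\rho_N\neq 0$ for infinitely many $N$.

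The paper's implementation differs in two places. For existence of the root it uses Weierstrass preparation over $\mathbb{Z}_p$ rather than the Newton polygon, which is equivalent. More interestingly, it bounds $|\rho_N|$ via Hadamard's inequality on the Sylvester matrix, and bounds $|\rho_N|_p$ via the B\'ezout identity $U_N(x)g(x)+V_N(x)f_N(x)=\rho_N$ evaluated at $x=\lambda$, obtaining $|\rho_N|_p\le |f_N(\lambda)|_p$ directly without ever looking at the other conjugates $\lambda_2,\dots,\lambda_d$. Your product-formula route $\rho_N=q^{b(N)}\prod_i f_N(\lambda_i)$ is equally valid and arguably more transparent, at the cost of tracking the factor $H^{b(N)}$ from the bad conjugates---which, as you correctly observe, is dominated by the single good factor $|\lambda|_p^{b(N+1)}$.
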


\begin{proof}
We first note that if we regard $f$ as a power series in $\mathbb{Z}_p[[x]]$ then if $m$ is the smallest index $i$ for which $p\nmid a_i$, then by the Weierstrass preparation theorem there is a monic polynomial $Q(x)\in \mathbb{Z}_p[x]$ of degree $m$ and a unit $u(x)\in \mathbb{Z}_p[[x]]$ such that $Q(x)=f(x)u(x)$.  Then the product of the roots of $Q(x)$ is equal to the constant coefficient of $(-1)^m f(x)u(x)$, which is in $p\mathbb{Z}_p\setminus \{0\}$. Hence there is some nonzero root $\lambda$ of $Q(x)$ in $\overline{\mathbb{Q}}_p$ that has $p$-adic absolute value strictly less than $1$, and since $u(x)$ is a unit, we see that $\lambda$ is a root of $f(x)$.  

Now suppose for contradiction that \( \lambda \) is algebraic over \( \mathbb{Q} \). Then there exists an irreducible polynomial
\[
P(x) = p_0 + p_1 x + \cdots + p_d x^d \in \mathbb{Z}[x]
\]
such that \( P(\lambda) = 0 \).

For each \( N \in \mathbb{Z}_{>0} \), define the truncation
\[
\Phi_N(x) := a_0 + a_1 x^{b(1)} + \cdots + a_N x^{b(N)}.
\]
We claim that $\Phi_N(\lambda)$ is nonzero for infinitely many $N$.  To see this, observe that if this is not the case then we must have $\Phi_n(\lambda)=0$ for all sufficiently large $n$.  But then 
$a_{n+1}\lambda^{b(n+1)}=\Phi_{n+1}(\lambda)-\Phi_n(\lambda)=0$ for all sufficiently large $n$. This is impossible, however, as the $a_i$ are nonzero and $\lambda$ is nonzero.

If \( N \) is such that \( \Phi_N(\lambda) \neq 0 \), then
\begin{equation}
\label{eq:bound}
0 < |\Phi_N(\lambda)|_p = \left| f(\lambda) - \Phi_N(\lambda) \right|_p = \left| \sum_{n > N} a_n \lambda^{b(n)} \right|_p 
\end{equation}
We claim that for $n$ sufficiently large we have $|a_n \lambda^{b(n)}|_p > |a_{n+1} \lambda^{b(n+1)}|_p$ and hence if we choose $N$ large enough such that $\Phi_N(\lambda) \neq 0$ then Equation (\ref{eq:bound}) gives
\begin{equation}
\label{eq:bound'}
0 < |\Phi_N(\lambda)|_p \le |a_{N+1}|_p |\lambda|_p^{b(N+1)}.
\end{equation}
To show this claim, note that since $a_{n+1}$ is an integer, we have
$ |a_{n+1} \lambda^{b(n+1)}|_p \le |\lambda|_p^{b(n+1)}$, and since $|a_n| \le \kappa C^{b(n)}$ we have $|a_n|_p \ge \kappa^{-1} C^{-b(n)}$.  Thus to prove the claim, it suffices to show that 
$|\lambda|_p^{b(n+1)-b(n)} < \kappa^{-1} C^{-b(n)}$ for $n$ sufficiently large. But by assumption the sequence $b(n)$ grows super-exponentially with $n$ and so $b(n+1)/b(n)\to \infty$.  Then since $|\lambda|_p<1$, taking logarithms and comparing gives the claim.  Thus we have established Equation (\ref{eq:bound'})

Since \( P(x) \) is irreducible and \( \Phi_N(\lambda) \neq 0 \), the polynomials \( P(x) \) and \( \Phi_N(x) \) are coprime when regarded as polynomials in $\mathbb{Q}[x]$. Thus their resultant \( B_N := \operatorname{Res}(P, \Phi_N) \) is a nonzero integer \cite[Chapt. 3, \S6, Proposition 3]{CLO}. Moreover, by \cite[Chapt. 3, \S6, Proposition 5]{CLO}, there exist integer polynomials \( U_N(x), V_N(x) \) such that
\begin{equation}
\label{eq:Bezout}
U_N(x) P(x) + V_N(x) \Phi_N(x) = B_N.
\end{equation}
The Sylvester matrix used to compute \( B_N \) has \( b(N) + d \) rows and columns. Using Hadamard's inequality (see \cite[Corollary~7.8.3]{HornJohnson}), we estimate:
\[
|B_N|^2 \leq \left( \sum_{i=0}^d p_i^2 \right)^{b(N)} \cdot \left( \sum_{i=0}^N a_i^2 \right)^d.
\]
Since \( |a_i| \leq \kappa C^{b(i)} \), we obtain
\[
\sum_{i=0}^N a_i^2 \leq \sum_{i=0}^N \kappa^2 C^{2b(i)} \le \kappa^2 (1+C^2+C^4+\cdots + C^{2b(N)}),
\]
where the right-most inequality follows from the fact that the $b(i)$ are increasing.  Notice that we can bound the right side by
$$\kappa^2 C^{2b(N)}(1+C^{-2}+C^{-4}+\cdots ) = \kappa^2 C^{2b(N)} (1-C^{-2})^{-1}.$$
Letting \( L := \sum_{i=0}^d p_i^2 \), we have
\begin{equation} \label{eq:Bn1}
0 < |B_N|^2 \leq \kappa^2 (1 - C^{-2})^{-1} L^{b(N)} C^{2b(N)}.
\end{equation}

Now, substituting \( x = \lambda \) into Equation~\eqref{eq:Bezout} gives
\[
B_N = V_N(\lambda) \Phi_N(\lambda),
\]
and since \( V_N \) has integer coefficients and \( |\lambda|_p < 1 \), we have \( |V_N(\lambda)|_p \leq 1 \), so
\[
0 < |B_N|_p \leq |\Phi_N(\lambda)|_p < |a_{N+1}|_p \cdot |\lambda|_p^{b(N+1)} \le |\lambda|_p^{b(N+1)}.
\]
Letting \( c := |\lambda|_p < 1 \), we then get
\[
|B_N|^2 \geq |B_N|_p^{-2} > c^{-2b(N+1)}
\]
Combining this with the earlier upper bound from Equation (\ref{eq:Bn1}) yields
\[
\kappa^{2} (1 - C^{-2})^{-1} L^{b(N)} C^{2b(N)} > c^{-2b(N+1)}.
\]
Taking logarithms and dividing by \( b(N) \), we obtain
\[
\log L + 2\log C + \frac{1}{b(N)} \log\left( \kappa^{2}(1 - C^{-2})^{-1} \right) > -\frac{2b(N+1)}{b(N)} \log c.
\]
Since \( -\log c > 0 \) and \( b(N+1)/b(N) \to \infty \), the right-hand side tends to \( \infty \), while the left-hand side is bounded above. This is a contradiction, so \( \lambda \) cannot be algebraic. 
\end{proof}

We also have a positive characteristic analogue of this result.  To simplify our arguments we use the non-Archimedean absolute value on $\mathbb{F}_p[[t]]$ defined by $|0|=0$ and $|t^n u(t)|=1/2^n$ whenever $u(t)$ is a power series with nonzero constant term. The algebraic closure of the field $\mathbb{F}_p((t))$ is somewhat more complicated than that of Laurent series over a characteristic zero field due to the existence of Artin-Schreier extensions and inseparability phenomena. One can nevertheless give a concrete description of an algebraically closed extension of $\mathbb{F}_p((t))$ (which is strictly larger than the actual algebraic closure) in terms of Hahn power series (see Kedlaya \cite{Kedlaya1, Kedlaya}). Here, given a field $K$, we let $K((t^{\mathbb{Q}}))$ denote the formal series of the form
\begin{equation}
\label{eq:Hahn}
\sum_{\alpha\in \mathbb{Q}} c_{\alpha} t^{\alpha},
\end{equation} where $c_{\alpha}\in K$ for $\alpha\in \mathbb{Q}$ and where the set $\{\alpha\in \mathbb{Q}\colon c_{\alpha}\neq 0\}\subseteq \mathbb{Q}$ is well-ordered. This well-ordered constraint allows one to multiply formal series in this ring \cite[\S2]{Kedlaya1}.  Then if $K$ is algebraically closed then $K((t^{\mathbb{Q}}))$ is itself algebraically closed \cite[Proposition 1]{Kedlaya1} and we can extend the usual valuation on $K((t))$ to $K((t^{\mathbb{Q}}))$ by declaring that the valuation of a nonzero series of the form given in Equation (\ref{eq:Hahn}) is $\min\{ \alpha\colon c_{\alpha}\neq 0\}$. We then let $K((t^{\mathbb{Q}}))_{>0}$ denote the set of Hahn power series with positive valuation, which is the maximal ideal of the valuation subring of this field.

\begin{theorem}
\label{thm:transcendencep}
Let $p$ be a prime number and let $b(n)$ be an increasing sequence of nonnegative integers that grows super-exponentially with $n$ such that $b(0)=1$.  Suppose that 
$a_0, a_1, a_2,\ldots $ are nonzero polynomials in $\mathbb{F}_p[t]$ such that:
\begin{enumerate}
\item $a_0\in t\mathbb{F}_p[t]$;
\item $a_i\not\in t\mathbb{F}_p[t]$ for some $i>0$;
\item there is a positive constant $C$ such that ${\rm deg}(a_n)\le C\cdot b(n)$ for all $n\ge 1$.
\end{enumerate}
Then the power series $$f(x):=\sum_{n\ge 0} a_n x^{b(n)}$$ has a root $\lambda$ in $\overline{\mathbb{F}}_p((t^{\mathbb{Q}}))_{>0}$ with $\lambda$ not algebraic over $\mathbb{F}_p(t)$.
\end{theorem}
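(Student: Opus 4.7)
The plan is to mirror the proof of Theorem~\ref{thm:transcendence}, replacing the $p$-adic absolute value on $\mathbb{Q}_p$ with the $t$-adic absolute value on $\mathbb{F}_p((t))$ and the Euclidean absolute value on $\mathbb{Z}$ with the function $a \mapsto 2^{\deg_t a}$ on $\mathbb{F}_p[t]$. First, I would apply the Weierstrass Preparation Theorem to $f(x) \in \mathbb{F}_p[[t]][[x]]$. Hypotheses (1) and (2) say that the $x^0$-coefficient of $f$ lies in the maximal ideal $(t) \subseteq \mathbb{F}_p[[t]]$, while the $x^{b(m)}$-coefficient is a unit there for the least $m \geq 1$ with $a_m \notin t\mathbb{F}_p[t]$. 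This yields a factorization $f(x) = Q(x)u(x)$ with $Q \in \mathbb{F}_p[[t]][x]$ monic of degree $b(m)$ and $u \in \mathbb{F}_p[[t]][[x]]^{\times}$. Since $Q(0)$ is a unit multiple of the nonzero element $a_0 \in t\mathbb{F}_p[[t]]$, the product of the roots of $Q$ has positive $t$-adic valuation, so $Q$ has at least one nonzero root $\lambda \in \overline{\mathbb{F}}_p((t^{\mathbb{Q}}))_{>0}$, and this $\lambda$ is then also a root of $f$.

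Assuming for contradiction that $\lambda$ is algebraic over $\mathbb{F}_p(t)$, I would take an irreducible annihilator $P(x) = p_0 + \cdots + p_d x^d \in \mathbb{F}_p[t][x]$ and set $\Phi_N(x) := \sum_{n \leq N} a_n x^{b(n)}$. The same telescoping argument as in the characteristic-zero proof gives $\Phi_N(\lambda) \neq 0$ for infinitely many $N$; for any such $N$ sufficiently large, the ultrametric inequality together with $|a_n|_t \leq 1$ and the super-exponential growth of $b(n)$ yields
\[
0 < |\Phi_N(\lambda)|_t \leq |\lambda|_t^{b(N+1)}.
\]
Forming $B_N := \operatorname{Res}(P, \Phi_N) \in \mathbb{F}_p[t] \setminus \{0\}$ and using the Bezout identity $U_N(x)P(x) + V_N(x)\Phi_N(x) = B_N$, specialization at $x = \lambda$ gives $B_N = V_N(\lambda)\Phi_N(\lambda)$, and $|V_N(\lambda)|_t \leq 1$ then forces $|B_N|_t \leq |\lambda|_t^{b(N+1)}$.

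The main substitute for Hadamard's inequality is the elementary degree bound $\deg_t \det(M) \leq \sum_i \max_j \deg_t M_{ij}$, valid for any square matrix $M$ over $\mathbb{F}_p[t]$. Applied to the $(b(N)+d) \times (b(N)+d)$ Sylvester matrix---whose $b(N)$ rows coming from $P$ have entries of $t$-degree at most $D := \max_i \deg_t p_i$, and whose $d$ rows coming from $\Phi_N$ have entries of $t$-degree at most $C \cdot b(N)$ by hypothesis (3)---this yields $\deg_t B_N \leq b(N)(D + dC)$. Since $B_N$ is a nonzero polynomial in $t$, its $t$-adic valuation is bounded above by its $t$-degree, giving $|B_N|_t^{-1} \leq 2^{\deg_t B_N} \leq 2^{b(N)(D + dC)}$. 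Combining with $|B_N|_t \leq |\lambda|_t^{b(N+1)}$, taking logarithms, and dividing by $b(N)$, the super-exponential growth $b(N+1)/b(N) \to \infty$ forces the contradiction $-\log_2 |\lambda|_t \leq 0$.

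The main technical obstacle I anticipate is the arena in which $\lambda$ lives: unlike the characteristic-zero setting, the algebraic closure of $\mathbb{F}_p((t))$ is not itself complete, so I would work inside the algebraically closed Hahn-series field $\overline{\mathbb{F}}_p((t^{\mathbb{Q}}))$, where the well-orderedness of supports lets one make sense of $f(\lambda)$, $u(\lambda)$, $V_N(\lambda)$, and the tails $\sum_{n > N} a_n \lambda^{b(n)}$. This replaces the use of $p$-adic completeness in the original argument, and once the framework is in place, the remaining estimates are direct analogues of their characteristic-zero counterparts.
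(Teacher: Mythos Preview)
Your proposal is correct and follows essentially the same approach as the paper's proof: Weierstrass preparation over $\mathbb{F}_p[[t]]$ to produce a root $\lambda$ of positive $t$-adic valuation, the telescoping argument to get $|\Phi_N(\lambda)|_t\le|\lambda|_t^{b(N+1)}$ for infinitely many $N$, and the row-wise degree bound on the Sylvester determinant to force $\deg_t B_N=O(b(N))$, contradicting $v_t(B_N)\ge b(N+1)\,v_t(\lambda)$. Your write-up is in fact a bit more precise than the paper's in two places (the degree of the Weierstrass polynomial is $b(m)$ rather than $m$, and the tail bound is stated as $|\lambda|_t^{b(N+1)}$ rather than the paper's $1/2^{b(N+1)}$), but the argument is the same.
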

\begin{proof} We argue as in the proof of Theorem \ref{thm:transcendence}. 
We first note that if we regard $f$ as a power series in $\mathbb{F}_p[[t]][[x]]$ then if we let $m$ denote the smallest index $i$ for which $t\nmid a_i$, by the Weierstrass preparation theorem there is a monic polynomial $Q(x)\in \mathbb{F}_p[[t]][x]$ of degree $m$ and a unit $u(x)\in \mathbb{F}_p[[t]][[x]]$ such that $Q(x)=f(x)u(x)$.  Then the product of the roots of $Q(x)$ is equal to the constant coefficient of $(-1)^m f(x)u(x)$, which is in $t\mathbb{F}_p[t]\setminus \{0\}$. Hence there is some nonzero root $\lambda$ of $Q(x)$ in $\overline{\mathbb{F}}_p((t^{\mathbb{Q}}))_{>0}$, and since $u(x)$ is a unit, we see that $\lambda$ is a root of $f(x)$.  

We let 
$\Phi_N(x) = \sum_{n=0}^N a_n x^{b(n)}$.  Then since $\Phi_{n+1}(\lambda)-\Phi_n(\lambda) =a_{n+1}\lambda^{b(n+1)}\ne 0$, we see that for infinitely many $N$ we must have $0<|\Phi_N(\lambda)| = 1/2^{b(N+1)}$.  On the other hand if $\lambda$ is algebraic then there is some polynomial $P\in \mathbb{F}_p[t][x]$ such that $P(\lambda)=0$.  We can again compute the resultant of $P$ and $\Phi_N$ and we see by 
Definition \ref{def:res} that if $H$ denotes the maximum of the $t$-degrees of the coefficients of $P$, then the resultant is 
the determinant of a $(b(N)+{\rm deg}(P))\times (b(N)+{\rm deg}(P))$ matrix in which the first ${\rm deg}(P)$ rows have entries in $\{0, a_0,\ldots ,a_N\}$ and $b(N)$ rows have entries that are either coefficients of $P$ or $0$.  In particular, since the determinant is a product of at most one entry from each row, we see that the determinant is a polynomial in $t$ that has degree at most
$$H \cdot b(N) + \max({\rm deg}_t(a_0),\ldots ,{\rm deg}_t(a_N))\cdot {\rm deg}(P).$$  By assumption this degree is $O(b(N))$ and the resultant is nonzero \cite[Chapt. 3, \S6, Proposition 3]{CLO}, provided $\Phi_N(\lambda)\neq 0$.  On the other hand, 
by \cite[Chapt. 3, \S6, Proposition 5]{CLO} we have that there exist polynomials $U_N(x)$ and $V_N(x)$ in $\mathbb{F}_p[t][x]$ such that the resultant is equal to $$f(x) U_N(x) + P(x) V_N(x).$$ Substituting $x=\lambda$ and using the fact that
$|f(\lambda)|,|\Phi_N(\lambda)| \le 1/2^{b(N+1)}$, $|\lambda|<1$, we see that 
$$0<|f(\lambda) U_N(\lambda)+P(\lambda) V_N(\lambda)| \le 1/2^{b(N+1)}$$ whenever $\Phi_N(\lambda)\neq 0$ and so
the degree of the resultant must be at least $b(N+1)$ for infinitely many $N$.  Thus we obtain a contradiction.  The result follows.
\end{proof}

The connection between this transcendence result and a power series not being associate to a polynomial is a consequence of the following result.  We recall that in a Noetherian integral domain with maximal ideal $P$, the intersection of the powers of $P$ is zero by the Krull intersection theorem \cite[Theorem~8.10]{MatsumuraCommutativeRingTheory} and so $R$ can be regarded as a subring of the completion of $R$ with respect to the maximal ideal $P$.
\begin{proposition}
\label{prop:alg}
Let $R$ be a Noetherian integral domain with maximal ideal $P$ and let $\widehat{R}$ denote the completion of $R$ with respect to $P$.  Suppose that $f(x)\in R[[x]]$ has constant coefficient in $P$, and that, when regarded as a power series over $\widehat{R}$, $f(x)$ has a zero $\alpha$ that is not algebraic over $R$ and such that $\alpha^m \in P\widehat{R}$ for some $m\ge 1$. Then $f(x)$ is not associate to a polynomial.
\end{proposition}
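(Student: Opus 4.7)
The plan is to argue by contradiction. Suppose that $f(x)$ is associate to a polynomial, so we can write $f(x) = p(x) u(x)$ with $p(x) \in R[x]$ nonzero and $u(x) \in R[[x]]^\times$. The goal is to show that $\alpha$ must then be algebraic over $R$, contradicting the hypothesis.

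My first step is to promote the hypothesis $\alpha^m \in P\widehat{R}$ to the stronger statement $\alpha \in P\widehat{R}$. This is immediate because $\widehat{R}$ is local with maximal ideal $P\widehat{R}$: were $\alpha$ to lie outside this ideal, it would be a unit of $\widehat{R}$, and hence so would every power $\alpha^m$, contradicting the assumption. With $\alpha \in P\widehat{R}$ and $\widehat{R}$ being $P$-adically complete, the evaluation map $g(x) \mapsto g(\alpha)$ defines a continuous ring homomorphism $\widehat{R}[[x]] \to \widehat{R}$; in particular the factorization $f(x) = p(x) u(x)$, read in $R[[x]] \subseteq \widehat{R}[[x]]$, yields $0 = f(\alpha) = p(\alpha) u(\alpha)$ in $\widehat{R}$.

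The second step is to verify that $u(\alpha)$ is a unit of $\widehat{R}$. Since $u(x) \in R[[x]]^\times$, its constant term $u(0)$ is a unit of $R$ and hence does not lie in $P$. Writing $u(\alpha) = u(0) + \sum_{i \geq 1} u_i \alpha^i$ and using that each $\alpha^i \in P\widehat{R}$, we see that $u(\alpha) \equiv u(0) \pmod{P\widehat{R}}$, and the right-hand side is nonzero in the residue field $\widehat{R}/P\widehat{R}$. Hence $u(\alpha)$ is a unit of $\widehat{R}$. Dividing the equation $p(\alpha) u(\alpha) = 0$ by this unit gives $p(\alpha) = 0$, and since $p(x)$ is a nonzero element of $R[x]$ (we may assume $f \neq 0$, otherwise $f$ is trivially associate to a polynomial), this exhibits $\alpha$ as algebraic over $R$, the desired contradiction.

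I do not anticipate any real obstacle; the argument is essentially a direct evaluation. The one point meriting attention is that evaluation at $\alpha$ genuinely defines a ring homomorphism on $R[[x]]$, which requires $\alpha$ itself (not merely some power of $\alpha$) to lie in $P\widehat{R}$; this is exactly what the first step supplies, and it is the reason the hypothesis $\alpha^m \in P\widehat{R}$ is included in the statement.
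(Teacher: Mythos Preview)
Your overall strategy matches the paper's: assume $f = p \cdot u$, evaluate at $\alpha$, and conclude $p(\alpha) = 0$. The problem is in your first step. You deduce $\alpha \in P\widehat{R}$ from $\alpha^m \in P\widehat{R}$ by invoking locality of $\widehat{R}$, but this argument presupposes $\alpha \in \widehat{R}$, which the statement does not assert. The hypothesis only guarantees that some \emph{power} of $\alpha$ lies in $\widehat{R}$; $\alpha$ itself may live in a proper extension of $\operatorname{Frac}(\widehat{R})$. Indeed, the paper applies this proposition in positive characteristic (Theorem~\ref{rem:nonassociate}) with $\alpha$ lying in a purely inseparable extension of $\operatorname{Frac}(\widehat{R})$, where only $\alpha^{p^i}$ is known to belong to $P\widehat{R}$. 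Under your reading the parameter $m$ would be entirely superfluous, as your own argument shows; that redundancy is a signal that the intended generality is broader.

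The repair is small and leaves your second step essentially intact. The role of the condition $\alpha^m \in P\widehat{R}$ is to force $\alpha$ to have positive valuation in whatever finite extension $L$ of $\operatorname{Frac}(\widehat{R})$ it inhabits, and hence to lie in the maximal ideal $\mathfrak{m}_L$ of the valuation ring $\mathcal{O}_L$. Work in $\mathcal{O}_L$ rather than in $\widehat{R}$: the series $f$, $u$, and $u^{-1}$ all converge at such $\alpha$, and $u(\alpha) \equiv u(0) \pmod{\mathfrak{m}_L}$ with $u(0)$ a unit of $R \subseteq \mathcal{O}_L$, so $u(\alpha)$ is a unit of $\mathcal{O}_L$ and $p(\alpha)=0$. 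This is exactly how the paper's proof proceeds, phrased as ``$f(x)$ and $u(x)^{-1}$ are convergent power series on the maximal ideal of the valuation ring of some algebraic extension of the field of fractions of $\widehat{R}$.''
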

\begin{proof} If we have a factorization $f(x)=p(x)u(x)$ with $p(x)\in R[x]$ and $u(x)$ a unit of $R[[x]]$, then $p(x)=f(x)u(x)^{-1}$ and both $f(x)$ and $u(x)^{-1}$ are convergent power series on the maximal ideal of the valuation ring of some algebraic extension of the field of fractions of $\widehat{R}$.  Thus if $\alpha$ is a zero of $f(x)$ such that $\alpha^m$ is in $P\widehat{R}$ for some $m\ge 1$ then $p(\alpha)$ is necessarily equal to zero and so $\alpha$ is algebraic over $R$.
\end{proof}
We are now able to prove a general result, which when combined with Theorem \ref{thm:transcendence} gives a strengthened version of Theorem \ref{thm:main3} and so in particular constructs explicit examples of power series that are not associate to polynomials. We note that if $R$ is a finitely generated integral domain, then its group of units is a finitely generated abelian group by a result of Roquette \cite{Roquette} and so if $R$ has characteristic zero then only finitely many integer primes are units of $R$.   

Before giving the proof of Theorem \ref{thm:main3}, we require a general fact about algebraic extensions of completions. The following can be regarded as folklore, although we are unaware of an explicit reference in the generality that we need, so we give a brief sketch.
\begin{fact} Let $R$ be a Noetherian discrete valuation ring and let $\widehat{R}$ and $K$ denote respectively the completion of $R$ and the field of fractions of $R$.  If $L$ is a finite extension of ${\rm Frac}(\widehat{R})$ endowed with a prolongation of the valuation on ${\rm Frac}(\widehat{R})$ then there is a finite extension $L_0$ of $K$ and an absolute value on $L_0$ such that $L$ is isomorphic to a purely inseparable extension of the completion of $L_0$ with respect to the extended valuation.  In particular, in characteristic zero we have $L$ is isomorphic to the completion of $L_0$ as valued fields.
\label{Krasner}
\end{fact}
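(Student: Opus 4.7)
The plan is to reduce to the separable case and then invoke Krasner's lemma to descend from $\widehat{K} := \mathrm{Frac}(\widehat{R})$ to $K$.

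First, I would let $L^s$ denote the separable closure of $\widehat{K}$ inside $L$, so that $L/L^s$ is purely inseparable (and trivial when $\mathrm{char}\,K = 0$), and use the primitive element theorem to write $L^s = \widehat{K}(\alpha)$ where the minimal polynomial $f(x) \in \widehat{K}[x]$ of $\alpha$ is separable of degree $d = [L^s : \widehat{K}]$.

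The heart of the argument is a Krasner approximation. Letting $\alpha = \alpha_0, \alpha_1, \ldots, \alpha_{d-1}$ be the conjugates of $\alpha$ in an algebraic closure of $\widehat{K}$, the quantity $\delta := \min_{i \neq 0}|\alpha - \alpha_i|$ is strictly positive by separability. Using the density of $K$ in $\widehat{K}$, I would choose a monic polynomial $g(x) \in K[x]$ of degree $d$ whose coefficients approximate those of $f(x)$ closely enough that a standard continuity-of-roots argument produces a root $\alpha'$ of $g$ satisfying $|\alpha' - \alpha| < \delta$. Krasner's lemma then yields $\widehat{K}(\alpha) \subseteq \widehat{K}(\alpha')$, and comparison of degrees forces equality, so $\widehat{K}(\alpha') = L^s$.

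I would then set $L_0 := K(\alpha') \subseteq L$, equipped with the absolute value inherited via the embedding $L_0 \hookrightarrow L$. The completion $\widehat{L_0}$ with respect to this absolute value embeds into $L$ and contains both $\widehat{K}$ and $\alpha'$, so $\widehat{L_0} \supseteq \widehat{K}(\alpha') = L^s$; conversely, $\widehat{K}(\alpha')$ is a finite extension of the complete discretely valued field $\widehat{K}$ and hence is itself complete with the unique prolongation of the valuation, which forces $\widehat{L_0} = L^s$. Since $L/L^s$ is purely inseparable by construction, this gives the stated conclusion, with equality in characteristic zero. The main subtlety is verifying that the continuity-of-roots and Krasner arguments apply to the prolonged absolute value on $L$; this is where I would lean on the fact that prolongations of a discrete valuation to a finite extension of a complete field remain discrete and of rank one, so the usual non-Archimedean approximation arguments go through without modification.
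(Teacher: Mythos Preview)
Your proposal is correct and follows essentially the same route as the paper's (sketch of a) proof: reduce to the separable closure $L^s$ of $\widehat{K}$ in $L$, write $L^s = \widehat{K}(\alpha)$ via the primitive element theorem, approximate the minimal polynomial of $\alpha$ by one with coefficients in $K$ using density, and apply Krasner's lemma to conclude that $L^s = \widehat{K}(\alpha')$ for a root $\alpha'$ algebraic over $K$, then take $L_0 = K(\alpha')$. Your write-up is in fact more explicit than the paper's in two places---you spell out the degree comparison forcing $\widehat{K}(\alpha) = \widehat{K}(\alpha')$ and you verify that the completion of $L_0$ equals $L^s$ using completeness of finite extensions of complete discretely valued fields---both of which the paper leaves implicit.
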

\begin{proof} (Sketch) Let $L'$ denote the separable closure of ${\rm Frac}(\widehat{R})$ in $L$.  Then it suffices to show that there is a finite extension $L_0$ of $K$ and an absolute value on $L_0$ such that $L'$ is isomorphic to the completion of $L_0$. 
Then since $L'$ is separable over ${\rm Frac}(\widehat{R})$, there is an element $\alpha$ such that
$$L'={\rm Frac}(\widehat{R})(\alpha).$$ Let $Q(x)\in {\rm Frac}(\widehat{R})[x]$ denote the minimal polynomial of $\alpha$ and let $E$ denote a splitting field of $Q(x)$ and let $|\,\cdot \,|_E$ denote an extension of the absolute value on $L_0$ to $E$. Using density of $R$ in $\widehat{R}$, we see there is a polynomial $Q_0(x)\in K[x]$ such that $Q_0(x)$ has a root $\beta$ that is closer to $\alpha$ with respect to a prolongation of $|\, \cdot \,|_E$ than to all other roots of $Q(x)$. Then by Krasner's lemma \cite[Corollaire 1, p. 190]{Ribenboim2}, we have $L'={\rm Frac}(\widehat{R})(\beta)$. Letting $L_0=K(\beta)$ now gives the result. 
\end{proof}
\begin{proof}[Proof of Theorem \ref{thm:main3}]
The first part of the result follows from Theorem \ref{thm:transcendence}.  Thus $f(x)$ has a root $\lambda\in \overline{\mathbb{Q}}_p$ that is not algebraic over $\mathbb{Q}$ and which lies in the maximal ideal of the integral closure of $\mathbb{Z}_p$ in $\overline{\mathbb{Q}}_p$.  Thus we can pick a finite extension $K$ of $\mathbb{Q}$ such that $\lambda$ lies in the maximal ideal of the completion of $\mathcal{O}_K$, where $\mathcal{O}_K$ is the valuation ring of $K$ with respect to a prolongation of the $p$-adic absolute value to $K$. Fact \ref{Krasner} gives that $K$ can be regarded as the completion of a number field $K_0$ with respect to a suitable prolongation of the $p$-adic absolute value to $K_0$.  Letting $R$ denote the valuation ring of $K_0$, we see that $\widehat{R}=\mathcal{O}_K$.
The fact that $f(x)$ is not associate to a polynomial now follows from Proposition \ref{prop:alg}, regarding $f(x)$ as a power series in $R[[x]]$ and observing that by construction the constant coefficient of $f(x)$ is in the maximal ideal of the local ring $R$ and that $f(x)$ has a root in the maximal ideal of $\widehat{R}=\mathcal{O}_K$ that is not algebraic over $K_0$.
\end{proof}
We note that an analogue of Theorem \ref{thm:main3} holds for the ring $\mathbb{F}_p[t]$, which we now state and prove.

\begin{theorem} \label{rem:nonassociate} 
Let $p$ be a prime number and let $b(n)$ be an increasing sequence of nonnegative integers with $b(0)=1$ such that $b(n+1)/b(n)\to\infty$ as $n\to\infty$.  Suppose that 
$a_0, a_1, a_2,\ldots $ are nonzero polynomials in $\mathbb{F}_p[t]$ such that:
\begin{enumerate}
\item $a_0\in t\mathbb{F}_p[t]$;
\item $a_i\not\in t\mathbb{F}_p[t]$ for some $i>0$;
\item there is a positive constant $C$ such that ${\rm deg}(a_n)\le C\cdot b(n)$ for all $n\ge 1$.
\end{enumerate}
Then the power series $$f(x):=\sum_{n\ge 0} a_n x^{b(n)}$$ has a root $\lambda$ in $\overline{\mathbb{F}}_p((t^{\mathbb{Q}}))_{>0}$ with $\lambda$ not algebraic over $\mathbb{F}_p(t)$. Moreover, $f(x)$ cannot be factored in $\mathbb{F}_p[t][[x]]$ as a product of a polynomial and a unit of $\mathbb{F}_p[t][[x]]$, which demonstrates the failure of the Weierstrass property in this setting.
\end{theorem}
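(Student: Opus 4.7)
The plan is to deduce the transcendence of $\lambda$ directly from Theorem~\ref{thm:transcendencep} and then to derive the non-factorization claim by a direct non-Archimedean evaluation argument, in the spirit of the proof of Proposition~\ref{prop:alg}. Since the hypotheses on $b(n)$ and on the $a_n \in \mathbb{F}_p[t]$ are exactly those of Theorem~\ref{thm:transcendencep}, that result produces a root $\lambda \in \overline{\mathbb{F}}_p((t^\mathbb{Q}))_{>0}$ of $f(x)$ that is not algebraic over $\mathbb{F}_p(t)$, which gives the first half of the conclusion.

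For the factorization claim, I would argue by contradiction. Suppose $f(x) = p(x)\,u(x)$ in $\mathbb{F}_p[t][[x]]$ with $p(x) \in \mathbb{F}_p[t][x]$ and $u(x) = \sum_{n \ge 0} u_n x^n$ a unit of $\mathbb{F}_p[t][[x]]$. The idea is to evaluate both sides at $\lambda$, viewed in the complete valued field containing it, and to use the identity $p(\lambda)\,u(\lambda) = f(\lambda) = 0$ to force $\lambda$ to be algebraic over $\mathbb{F}_p(t)$. For this to make sense I first need convergence: because every $u_n$ lies in $\mathbb{F}_p[t]$, it has non-negative $t$-adic valuation and hence $|u_n| \le 1$; since $|\lambda| < 1$, the non-Archimedean bound $|u_n \lambda^n| \le |\lambda|^n$ gives convergence of $u(\lambda)$, and the same estimate applied to $a_n \lambda^{b(n)}$ gives convergence of $f(\lambda)$, which vanishes by the choice of $\lambda$.

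The remaining point is to show that $u(\lambda) \ne 0$. Since $u(x)$ is a unit of $\mathbb{F}_p[t][[x]]$, its constant term $u_0$ is a unit of $\mathbb{F}_p[t]$ and so lies in $\mathbb{F}_p^{\times}$; all subsequent terms $u_n \lambda^n$ have strictly positive valuation, so modulo the maximal ideal of the valuation ring containing $\lambda$ one has $u(\lambda) \equiv u_0 \not\equiv 0$, forcing $u(\lambda) \ne 0$. Combining this with $p(\lambda)\,u(\lambda) = 0$ yields $p(\lambda) = 0$; since $f \ne 0$ forces $p \ne 0$ in $\mathbb{F}_p[t][x]$, this gives that $\lambda$ is algebraic over $\mathbb{F}_p(t)$, contradicting the transcendence obtained in the first step. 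I do not expect a real obstacle here: the only delicate point is the non-vanishing of $u(\lambda)$, and in this positive-characteristic setting that is automatic once one observes that elements of $\mathbb{F}_p[t]$ have non-negative $t$-adic valuation, which places $u_0$ in $\mathbb{F}_p^{\times}$.
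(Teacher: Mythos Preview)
Your proof is correct and takes a genuinely more direct route than the paper's. The paper first places $\lambda$ in a finite extension $K$ of $\mathbb{F}_p((t))$, then invokes Fact~\ref{Krasner} to realize the separable closure of $\mathbb{F}_p((t))$ in $K$ as the completion of a finite extension $K_0$ of $\mathbb{F}_p(t)$, and finally applies the general Proposition~\ref{prop:alg} with $R$ the valuation ring of $K_0$; this forces some bookkeeping with inseparability (one only knows $\lambda^{p^i}\in P\widehat{R}$ for some $i$). You bypass all of this machinery: once $\lambda$ sits in a complete non-Archimedean field with $|\lambda|<1$ (the Hahn field $\overline{\mathbb{F}}_p((t^{\mathbb{Q}}))$ works, being spherically complete, or equivalently the finite complete extension of $\mathbb{F}_p((t))$ generated by $\lambda$), evaluation at $\lambda$ is a ring homomorphism on power series with integral coefficients, the ultrametric inequality gives $|u(\lambda)|=|u_0|=1$, and the contradiction is immediate. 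The paper's detour is there for uniformity with the characteristic-zero argument for Theorem~\ref{thm:main3}, where Proposition~\ref{prop:alg} is the natural tool; in the concrete $\mathbb{F}_p[t]$ setting your direct evaluation is shorter and avoids both Krasner and the inseparability wrinkle.
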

\begin{proof}
The existence of such a $\lambda$ with the desired properties follows from Theorem \ref{thm:transcendencep}. Thus we have $\lambda\in \overline{\mathbb{F}}_p((t^{\mathbb{Q}}))_{>0}$ with $\lambda$ not algebraic over $\mathbb{F}_p(t)$.  We can thus pick a finite extension $K$ of $\mathbb{F}_p((t))$ such that $\lambda$ lies in the maximal ideal of the completion of $\mathcal{O}_K$, where $\mathcal{O}_K$ is the valuation ring of $K$ with respect to a prolongation of the $t$-adic absolute value to $K$. Fact \ref{Krasner} implies that the separable closure $K'$ of $\mathbb{F}_p((t))$ in the field $K$ can be regarded as the completion of a finite extension $K_0$ of $\mathbb{F}_p(t)$ with respect to a suitable prolongation of the $t$-adic absolute value to $K_0$.  Letting $R$ denote the valuation ring of $K_0$, we see that $\widehat{R}=\mathcal{O}_{K'}$.
The fact that $f(x)$ is not associate to a polynomial now follows from Proposition \ref{prop:alg}, regarding $f(x)$ as a power series in $R[[x]]$ and observing that by construction the constant coefficient of $f(x)$ is in the maximal ideal of the local ring $R$ and that $f(x)$ has a root $\lambda$ that is not algebraic over $K_0$ and such that $\lambda^{p^i}$ lies in the maximal ideal of $\widehat{R}=\mathcal{O}_K$ for some $i\ge 0$.
\end{proof}
%An analogue of Theorem \ref{thm:main3} holds for the ring $\mathbb{F}_p[t]$ in which we consider polynomials satisfying the conditions in Theorem \ref{thm:transcendencep}, where we again use Proposition \ref{prop:alg} and note that the completion of $\mathbb{F}_p[t]$ with respect to the ideal $(t)$ is the power series ring $\mathbb{F}_p[[t]]$, which plays the role analogous to $\mathbb{Z}_p$ in the mixed-characteristic case.
%\end{remark}
\section{Undecidability}
\label{undecidable}

Among the famous list of problems proposed by David Hilbert in his 1900 ICM address is the \emph{tenth problem}, which asks whether there exists an algorithm that takes an integer polynomial \( P(x_1,\ldots,x_n) \) as input and determines whether the Diophantine equation \( P(x_1,\ldots,x_n)=0 \) has an integer solution. This was ultimately resolved in the negative by Matiyasevich \cite{Mat}, who showed that no such algorithm can exist.\footnote{This result is often called the ``DPRM theorem,'' as Matiyasevich’s argument builds on earlier work of Davis, Putnam, and Robinson \cite{DPR}.} 

To state this more precisely, we must clarify what is meant by an ``algorithm.'' In this context, an algorithm is a Turing machine which takes as input a polynomial \( P(x_1,\ldots,x_n) \) (encoded in some standard way) and halts after a finite number of steps, outputting either \emph{yes} or \emph{no}, depending on whether \( P=0 \) has an integer solution. Informally, one can think of this as asking whether a computer program exists which will correctly answer the question in all cases (regardless of how long it may take). The Church-Turing thesis asserts that any function that is effectively computable by a mechanical process can be computed by a Turing machine.

Just as in the case of Hilbert's Tenth Problem, one can ask about the decidability of many mathematical questions, and similar undecidability results appear in various domains, such as the word problem for groups \cite{Novikov}, the domino problem \cite{Berger}, the Post correspondence theorem \cite{Post}, and variants of the Collatz conjecture \cite{Conway}.

In our setting, it is natural to ask whether there exists a decision procedure for determining whether a power series \( f(x) \in \mathbb{Z}[[x]] \) is associate to a polynomial. Of course, this question is not meaningful in full generality, since arbitrary power series have infinitely many coefficients and a Turing machine can only take finite data as input. To restrict to a computable setting, we consider only those power series whose coefficients are given by a computable function—that is, we assume there exists a Turing machine that, given an input \( n \), computes the coefficient of \( x^n \) in \( f(x) \).

\begin{definition}
A power series \( f(x) = \sum_{n \ge 0} a(n) x^n \in \mathbb{Z}[[x]] \) is called \emph{computable} if there exists a Turing machine that computes the coefficient function \( a : \mathbb{N} \to \mathbb{Z} \).
\end{definition}
We note that this notion of a computable power series naturally extends to any ring \( R \) for which some set of its elements can be encoded using a (computable) language over a finite alphabet—much like we can encode integers via pairs \( (\epsilon, w) \), where \( \epsilon \in \{\pm\} \) indicates the sign and \( w \) is a binary string representing the magnitude.

In this restricted setting, we can now show that the problem of deciding whether a computable power series is associate to a polynomial is undecidable. As shown in Proposition~\ref{prop:alg}, the question of whether a power series is associate to a polynomial is closely tied to the algebraicity of its zeros in completions of the base ring. In the case of integer power series, this is tied to the problem of transcendence of \( p \)-adic zeros. Proving transcendence of such zeros is notoriously difficult. For instance, even in the classical complex-analytic setting, the transcendence of \( \pi \), which is a non-trivial zero of \( \sin x \), is a deep result. It is therefore perhaps unsurprising that no algorithm can determine whether a general computable power series in \( \mathbb{Z}[[x]] \) is associate to a polynomial.

\begin{proof}[Proof of Theorem \ref{thm:undecidable}]
Given a polynomial \( P(x_1,\ldots,x_d) \in \mathbb{Z}[x_1,\ldots,x_d] \), we construct a computable function \( a_P : \mathbb{N} \to \mathbb{Z} \) as follows. First, fix a computable bijection \( \theta : \mathbb{N} \to \mathbb{Z}^d \). (Figure~\ref{fig:comp} shows such a bijection in the case \( d = 2 \).)

Now define a function \( b_P : \mathbb{N} \to \mathbb{N} \) recursively by:
\[
b_P(0) = 1, \qquad
b_P(n) = b_P(n-1) + b_P(n-1)^{\prod_{\ell=0}^n P(\theta(\ell))^2 \cdot \prod_{\ell=0}^n (1 + P(\theta(\ell))^2)} \quad \text{for } n > 0.
\]
If \( P \) has an integer zero, then there exists \( j \) such that \( P(\theta(j)) = 0 \), in which case all exponents vanish for \( n \ge j \), and hence \( b_P(n) = b_P(n-1) + 1 \) for all large \( n \). On the other hand, if \( P \) has no integer zeros, then
\[
\prod_{\ell=0}^n P(\theta(\ell))^2 \cdot \prod_{\ell=0}^n (1 + P(\theta(\ell))^2) \ge 2^{n+1},
\]
and so \( b_P(n) \ge b_P(n-1) + b_P(n-1)^{2^n} \), and \( b_P(n) \) grows super-exponentially.

We shall now define our computable power series in $T[[x]]$.  Since $T$ is infinite and finitely generated, it is not Artinian.  Then it has some prime ideal $P$ such that $S:=T/P$ is a finitely generated integral domain of Krull dimension one.  Then either $S$ has characteristic zero, in which case ${\rm Frac}(S)$ is a finite extension of $\mathbb{Q}$, or it has positive characteristic and is a finite module over a polynomial subring $\mathbb{F}_p[t]$ for some prime $p$ by Noether normalization. In the first case, by Roquette's theorem \cite{Roquette}, the group of units is finitely generated and so there is an integer prime $p$ that is not a unit of $S$ and we take $a_P(0)=p$; otherwise, $S$ is a finite module over a subring $\mathbb{F}_p[t]$ and by going up, we have that $t$ is not a unit in $S$ and we take $a_P(0)=t'$, where $t'\in T$ is an element whose image in $S$ is equal to $t$.

We now define the coefficient function for $n>0$:
\[
a_P(n) = 
\begin{cases}
1 & \text{if } n = b_P(j) \text{ for some } j > 0, \\
0 & \text{otherwise}.
\end{cases}
\]
This function is computable since the sequence $b_P(j)$ is increasing with $j$. Define the power series
\[
f_P(x) := \sum_{n \ge 0} a_P(n) x^n = a_P(0) + \sum_{j \ge 0} x^{b_P(j)}.
\]

If \( P \) has an integer zero, then \( b_P(j) = 1 + b_P(j-1) \) for large \( j \), so \( f_P(x) \) is equal to $q(x)(1-x)^{-1}$ for some polynomial $q(x)$ and hence is associate to a polynomial. On the other hand, if \( P \) has no integer zeros, then we claim that $f_P(x)$ is not associate to a polynomial.  To see this, we note that it suffices to show that the image of $f_P(x)$ in $S[[x]]$ is not associate to a polynomial. Since \( b_P(n) \) grows super-exponentially, we claim that the image of \( f_P(x) \) in $S[[x]]$ is not associate to a polynomial.  To see this, observe that in the case when $S$ has characteristic zero, the field of fractions of $S$ is a finite extension of $\mathbb{Q}$, and by Theorem~\ref{thm:main3}, $f_P(x)$ has a root in $\overline{\mathbb{Q}}_p$ of $p$-adic absolute value strictly less than one that is not algebraic over $\mathbb{Q}$, and hence it is not algebraic over ${\rm Frac}(S)$. 
Fact \ref{Krasner} now gives that there is a finite extension $K$ of ${\rm Frac}(S)$ such that $\lambda$ lies in the maximal ideal of the valuation ring of the completion of $K$ with respect to an extension of the $p$-adic absolute value to $K$. Taking $R$ to be the valuation ring of $K$, we see that $\lambda$ lies in the maximal ideal of the completion of $R$ and $f_P(x)$ has constant coefficient in the maximal ideal of $R$.  Since $\lambda$ is not algebraic over $R$,
by Proposition \ref{prop:alg} we then see that $f_P(x)$ is not associate to a polynomial when regarded as a power series in $S[[x]]$ and hence is not associate to a polynomial as an element in $T[[x]]$. 

The positive characteristic case is handled similarly, noting that if $P$ has an integer zero then $f_P(x)$ is again associate to a polynomial.  To complete the proof in the positive characteristic case, we must show that if $P$ does not have an integer zero then $f_P(x)$ is not associate to a polynomial.  By construction we can identify $\mathbb{F}_p[t]$ with a subring of $S$ such that $S$ is a finite extension of $\mathbb{F}_p[t]$ and where $f_P(x)\in \mathbb{F}_p[t][[x]]$. Also by Theorem \ref{rem:nonassociate}, $f_P(x)$ has a root $\lambda$ in $\overline{\mathbb{F}}_p((t^{\mathbb{Q}}))_{>0}$ that is not algebraic over $\mathbb{F}_p(t)$ (and hence not over the field of fractions of $S$).  

We let $K\subseteq \overline{\mathbb{F}}_p((t^{\mathbb{Q}}))_{>0}$ be a finite extension of $\mathbb{F}_p((t))$ such that the maximal ideal of the valuation subring of $K$ (with respect to the $t$-adic valuation) contains $\lambda$. Then by Fact \ref{Krasner} there is a finite extension $K_0$ of ${\rm Frac}(S)$ such that if $R$ is the valuation subring of $K_0$ (with respect to a prolongation of the $t$-adic absolute value of $\mathbb{F}_p(t)$ to $K_0$) then $K$ is purely inseparable over the field of fractions of $\widehat{R}$. In particular, since $\lambda$ has $t$-adic absolute value $<1$, $\lambda^{p^i}$ is in the maximal ideal of $\widehat{R}$. Then using Proposition \ref{prop:alg} we see that $f_P(x)$ is not associate to a polynomial when regarded as an element of $R[[x]]$ and hence is not associate to a polynomial when regarded as an element of $S[[x]]$ and $T[[x]]$. 

Thus the undecidability of determining whether \( f_P(x) \) is associate to a polynomial follows from the undecidability of Hilbert’s tenth problem.
\end{proof}

\begin{figure}[t]
\centering
\begin{tikzpicture}[scale=1.2]
  \newcounter{n}
  \setcounter{n}{0}

  \foreach \x/\y in {
    0/0, 1/0, 1/1, 0/1, -1/1, -1/0, -1/-1, 0/-1, 1/-1,
    2/-1, 2/0, 2/1, 2/2, 1/2, 0/2, -1/2, -2/2, -2/1, -2/0,
    -2/-1, -2/-2, -1/-2, 0/-2, 1/-2, 2/-2
  } {
    \draw[gray!50] (\x,\y) rectangle +(1,1);
    \node at (\x+0.5,\y+0.5) {\scriptsize $\theta(\then)$};
    \stepcounter{n}
  }

  \foreach \x in {-2,...,2} {
    \node at (\x+0.5,-2.7) {\small $i = \x$};
  }
  \foreach \y in {-2,...,2} {
    \node at (-2.7,\y+0.5) {\small $j = \y$};
  }
\end{tikzpicture}
\caption{A computable bijection \( \theta : \mathbb{N} \to \mathbb{Z}^2 \) via a spiral walk.}
\label{fig:comp}
\end{figure}

\begin{remark}
The hypothesis that $T$ be infinite cannot be removed completely, as an Artinian principal ideal ring has the property that every power series over $T$ is associate to a polynomial by using a result of Kaplansky \cite[Theorem 12.3]{Kaplansky} and Theorem \ref{thm:main1}.  The hypothesis that $T$ be finitely generated as a ring also cannot be removed completely in the statement of Theorem \ref{thm:undecidable}; for example if $T$ is the (not finitely generated) ring $\mathbb{Q}$, then every nonzero element of $T[[x]]$ is expressible in the form $x^m u(x)$ for some $m\ge 0$ and some unit $u(x)$ and so the decision problem is trivially decidable in this case. \end{remark}
\section*{Acknowledgments}
We thank Rahim Moosa for important comments.  We are also indebted to the referee for numerous helpful comments and suggestions.

\end{document}